\newcommand{\cad}{c'est-\`a-dire}
\newcommand{\ssi}{si et seulement si}
\newcommand{\resp}{respectivement}
\newcommand{\da}{développement asymptotique}
\newcommand{\dl}{développement limité}
\newcommand{\haut}[1]{ {}^{#1}}
\newcommand{\bas}[1]{ {}_{#1}}
\newcommand{\abs}[1]{\left\lvert #1\right\rvert}
\newcommand{\norm}[1]{\left\lVert #1\right\rVert}
\newcommand{\set}[1]{\left\{ #1\right\}}
\newcommand{\wh}{\widehat}
\newcommand{\wt}{\widetilde}
\newcommand*{\lcoset}[2]{\left.\raisebox{-0.3ex}{$#1$}%
   \backslash\raisebox{0.3ex}{$#2$}\right.}
\newcommand{\vphi}{\varphi}
\newcommand{\ud}{\mathrm{d}}
\newcommand{\N}{\mathbb{N}}
\newcommand{\Z}{\mathbb{Z}}
\newcommand{\R}{\mathbb{R}}
\newcommand{\C}{\mathbb{C}}
\DeclareMathOperator{\Id}{Id}
\DeclareMathOperator{\tr}{tr}
\newcommand{\Sph}{\mathbb{S}}
\newcommand{\sym}{\mathrm{sym}}
\DeclareMathOperator{\Pan}{P}
\newcommand{\Rho}{\mathsf{P}}
\newcommand{\Iota}{\mathsf{J}}
\newcommand{\G}{\Gamma}
\newcommand{\Scal}{\mathrm{Scal}}
\newcommand{\Rm}{\mathrm{R}}
\newcommand{\Ric}{\mathrm{Ric}}
\DeclareMathOperator{\Vol}{\mathrm{Vol}}
\newcommand{\vol}{v}
\newcommand{\eucl}{\mathrm{eucl}}
\newcommand{\delr}{\partial_{r}}
\newcommand{\Homg}{\mathcal{H}}
\newcommand{\grandO}{\mathcal{O}}
\newcommand{\clap}{\mathfrak{c}}
\newcommand{\be}{\begin{equation}}
\newcommand{\ee}{\end{equation}}
\newcommand{\bea}{\begin{eqnarray*}}
\newcommand{\eea}{\end{eqnarray*}}
\newcommand{\bi}{\begin{itemize}}
\newcommand{\ei}{\end{itemize}}
\newcommand{\itbul}{\item[$\bullet$]}
\numberwithin{equation}{section}
\theoremstyle{plain}
\newtheorem{theo}{Théorème}[section]
\newtheorem{prop}[theo]{Proposition}
\newtheorem{coro}[theo]{Corollaire}
\newtheorem{lemm}[theo]{Lemme}
\theoremstyle{definition}
\newtheorem{defi}[theo]{Défintion}
\newtheorem*{notation}{Notations}
\theoremstyle{remark}
\newtheorem{rema}[theo]{Remarque}
\newtheorem*{merci}{Remerciements}
\theoremstyle{plain}
\newtheorem*{theoIdvpG}{Théorème \ref{theodvpG}}
\newtheorem*{theoIcovconf}{Théorème \ref{theocovconf}}
\newtheorem*{theoImassepos}{Théorème \ref{theomassepos}}
\newtheorem*{theoIinvas}{Théorème \ref{theoinvas}}
\author{B.~Michel\thanks{I3M -- Université Montpellier 2 -- France.
 E-mail: \texttt{benoit.michel@math.univ-montp2.fr}}}
\title{Masse des opérateurs GJMS}
\date{}
\begin{document}

\maketitle

\selectlanguage{francais}

\begin{abstract} On généralise la construction  de Habermann et Jost de métrique canonique dans une classe
conforme Yamabe-positive, faite à l'aide de la singularité de la fonction de Green du laplacien conforme.

En dimension $n=2k+1$, $2k+2$ ou $2k+3$, si le $k$-ième opérateur GJMS $P_k$ admet une fonction
de Green, on montre qu'en restriction à de bons choix de facteurs conformes, le terme constant de la singularité
de celle-ci est une densité conforme de poids
$2k-n$. S'il est positif, on l'utilise pour définir une métrique canonique dans la classe conforme. Dans le cas de
l'opérateur de Paneitz-Branson $P_2$, en dimension $5$, $6$ ou $7$, on
montre un résultat de positivité. Dans le cas général, on le relie également à un invariant asymptotique de la variété
obtenue par projection stéréographique \emph{via} la fonction de Green.
\end{abstract}

\selectlanguage{english}
\begin{abstract} This article generalizes a construction by Habermann and Jost of a canonical metric in a
Yamabe-positive conformal class, which uses the Green function of the conformal Laplacian.
In dimension $n=2k+1$, $2k+2$, or $2k+3$, if the $k$-th GJMS operator $\Pan_k$ admits a Green function,
the constant term of its singularity is shown to be a conformal density of weight $2k-n$, when restricted to
appropriate choices of conformal factor. When it is positive, it
is used to build a canonical metric in the conformal class.  In the case of the Paneitz-Branson operator
$\Pan_2$, in dimension $5$, $6$ or $7$, we show a positiveness result. In additition, we relate it to an asymptotic
invariant of the manifold obtained by stereographic projection \emph{via} the Green function.
\end{abstract}
\selectlanguage{francais}

\section{Introduction} \label{sec.massepk.intro}

Si l'approche de Fefferman et Graham \cite{FeffermanGraham-confinv,FeffermanGraham-ambmet}
s'avère très fructueuse pour les problèmes locaux en géométrie
conforme, la définition de métriques \og{}canoniques\fg{} dans une classe conforme donnée peut apporter beaucoup
au point de vue global. Par \og{}canonique\fg{}, on entend \og{}définie en fonction de la classe conforme\fg{} de sorte qu'une
équivalence conforme entre variétés soit une isométrie des métriques canoniques associées. De telles métriques
permettent par exemple de définir \emph{à la} Weil-Petersson une métrique sur l'espace des modules des classes
conformes, voir \cite{HabermannJost-methabjost} et les références qui y sont citées.

Dans le cas où l'invariant de Yamabe
\[
Y([g_{0}]) =\inf_{g\in [g_{0}]}\frac{\left(\int_{M}\Scal^{g} \ud\vol_{g}\right)}{\Vol(M,g)^{\frac{n-2}{n}}}
\]
(où $\Scal^{g}$ désigne la courbure scalaire de $g$) est négatif ou nul, il existe une unique métrique de volume
$1$ réalisant le minimum, ce qui donne un exemple de métrique canonique.

Dans le cas $Y([g_0])>0$, il n'y a plus unicité en général (voir \cite{Schoen-varscal}). Habermann et Jost
définissent alors dans \cite{HabermannJost-methabjost} une autre métrique canonique, en dimension $3$
et dans le cas localement conformément localement plat
(généralisant d'autres constructions, voir les références dans \cite{HabermannJost-methabjost, Habermann-LN}).
Ils utilisent pour cela le terme constant du \da{} au voisinage de la diagonale de la fonction de Green du laplacien
conforme $\Pan_1^g =\Delta_g +\frac{n-2}{4(n-1)}\Scal^g$.
Habermann \cite{Habermann-LN} étend la définition aux dimensions $4$ et $5$, en utilisant
un contrôle de la singularité de la fonction de Green de $\Pan_1$ permis par l'introduction de certaines coordonnées
normales conformes par Lee et Parker \cite{LeeParker-Yamabeprob}.

L'invariance conforme de la métrique de Habermann-Jost est principalement
due à la covariance conforme de $\Pan_1$, qui implique une covariance similaire pour sa fonction de Green. Il
est donc naturel de tenter d'adapter la construction à la famille d'opérateurs, dits GJMS, introduite par Graham,
Jenne, Mason et Sparling dans \cite{GJMS}. C'est l'objet principal de cet article.\\

On se place sur une variété différentielle compacte $M$, munie d'une classe con\-forme de métriques riemanniennes
$[g_0]$. Pour une métrique $g$ de $[g_0]$, le $k$-ième opérateur GJMS $\Pan_k^g$ est un
opérateur différentiel, défini pout tout $k$
entier si la dimension $n$ est impaire, et pour $k\leq n/2$ en dimension paire.
Sa propriété fondamentale est la loi de transformation sous les changements
conformes de métriques: si $\tilde{g}=\vphi^{\frac{4}{n-2k}}g$ est une métrique conforme%
\footnote{Cette forme du facteur conforme exclut la cas $n=2k$, qui n'est pas étudié ici. 
Voir les remarques \hyperref[rqdvpG1]{\ref*{rqdvpG}}.} à $g$,
$$\Pan_k^{\tilde{g}}u=\vphi^{-\frac{n+2k}{n-2k}}\Pan_k^g (\vphi u)\,.$$
L'opérateur $\Pan^g_k$ est par ailleurs une déformation de la puissance $k$-ième du laplacien par des
termes de plus bas degré, ce qui en fait un opérateur elliptique. Il admet donc des paramétrix, et il est naturel
de faire l'hypothèse qu'il est inversible en tant qu'opérateur $C^\infty(M)\to C^\infty(M)$.
Dans ce cas, son inverse admet un noyau intégral $G_k^g$, que l'on appelle \emph{fonction de Green} de $\Pan_k^g$.
C'est une fonction $C^\infty$ sur $M\times M$ privée de sa diagonale. On s'intéresse à la singularité
de $G_k^g(p,.)$ en un point $p\in M$.

La covariance conforme de $\Pan_k^g$ implique que si $\tilde{g}=\vphi^{\frac{4}{n-2k}}g$, la fonction de Green se
transforme selon:
$$G_k^{\tilde{g}}(x,y) =\frac{G_k^g(x,y)}{\vphi(x)\vphi(y)}\,.$$
Cette équivariance permet de faciliter le calcul de la singularité
de $G_k^g$ en choisissant convenablement $g$ dans sa classe conforme. D'après Lee et Parker
\cite{LeeParker-Yamabeprob}, on peut par un changement conforme fixer arbitrairement les
valeurs du tenseur de Ricci et de ses dérivées covariantes symétrisées en un point. Diverses
conditions de normalisation ont été proposées \cite{FeffermanGraham-ambmet,Gover-invtheory,LeeParker-Yamabeprob}.
Aux deux premiers ordres, elles coïncident
toutes avec la définition suivante:
\begin{defi} Une métrique $g$ est dite \emph{normale conforme à l'ordre $4$ en $p$} lorsque son tenseur
de Ricci vérifie:
\be\Ric_g(p) = 0\qquad\textrm{et}\qquad\left(\nabla_g\Ric_g\right)^\sym(p)=0\,,\tag{NC}\label{ricnormalconf}\ee
l'exposant $\sym$ désignant une symétrisation totale.
\label{defnormconf}\end{defi}
\noindent Étant fixé un point $p\in M$, on peut toujours trouver de telles métriques dans une classe conforme
fixée. En fait, cette condition laisse le $1$-jet de $g$ libre, et en fonction de celui-ci fixe son $3$-jet.

Avec cette normalisation, on obtient le résultat suivant:
\begin{theoIdvpG}[version partielle] Supposons $2k+1\leq n\leq 2k+3$ et que $\Pan_k^g$ admet une fonction
de Green $G_k^g$. Soit $p\in M$.
Si la métrique $g$ est normale conforme à l'ordre $4$ en $p$,
il existe un réel $A$ tel que pour tout $x$ au voisinage de $p$,
$$ G_k^g(p,x) = \frac{1}{\clap_{n,k}}r^{2k-n} + A + o(1)$$
où $r$ désigne la distance géodésique de $x$ à $p$ au sens de la métrique $g$, et
$$\clap_{n,k} =\Vol(\Sph^{n-1})2^{k-1}(k-1)!(n-2)(n-4)\dotsm(n-2k)\,.$$
\end{theoIdvpG}
\noindent L'énoncé complet fournit également un \da{} pour les autres valeurs de $k$; voir ci-après.
C'est le coefficient $A$ qui sera utilisé dans la suite, c'est pourquoi on se limite désormais
au cas $2k+1\leq n\leq 2k+3$ et $P_k^g$ inversible (cette condition est conformément invariante).

Deux métriques normales conformes en $p$ fournissent deux coefficients $A$ différents.
En raison de la covariance de $G_k$, on s'attend au lien suivant entre ceux-ci :
\begin{theoIcovconf} Soient $g$ et $\tilde{g}=\vphi^{\frac{4}{n-2k}}g$ deux métriques normales conformes
à l'ordre $4$ en $p$. Les nombres $A$ et $\tilde{A}$ produits par le théorème \ref{theodvpG}
pour $G_k^g$ et $G_k^{\tilde{g}}$ \resp{} vérifient
$$\tilde{A} =\vphi(p)^{-2}A\,.$$
\end{theoIcovconf}

Le coefficient $A$ s'étend ainsi en une densité conforme de poids $2k-n$:
\begin{defi} \label{defmassepk} Soit $g\in [g_0]$. Soit $p\in M$. Il existe une métrique
$\bar{g}=\psi^{\frac{4}{n-2k}} g$ normale conforme à l'ordre $4$ en $p$, pour
laquelle le théorème \ref{theodvpG} fournit un coefficient $\bar{A}$.
On appelle \emph{masse} en $p$ de l'opérateur $\Pan_k^g$ le nombre, indépendant de $\bar{g}$:
$$A_p^g =\psi(p)^{2}\bar{A}\,.$$
\end{defi}
\noindent Le terme \og{}densité conforme\fg{} signifie que si $\tilde{g} =\vphi^{\frac{4}{n-2k}}g$,
alors $A^{\tilde{g}}_p =\vphi(p)^{-2}A_p^g$. La fonction de $p$ ainsi obtenue s'avère lisse
(proposition \ref{propreg}). Sous réserve de positivité, elle permet de construire une métrique à la
manière de Habermann et Jost \cite{HabermannJost-methabjost, Habermann-LN}:
\begin{defi} \label{defmetHab}
La \emph{métrique de Habermann-Jost} est définie en tout point $p$ où la masse de l'opérateur $\Pan_k^g$
est positive par
$$\mathfrak{g}_p =\left(A_p^g\right)^{\frac{2}{n-2k}}g_p\,.$$
\end{defi}
\noindent Cette métrique est indépendante du choix de $g$ dans la classe conforme.

Dans le cas $k=1$, la positivité qui permet à Habermann de définir cette métrique est le théorème de la
masse positive de Schoen et Yau, appliqué au terme constant du \da{} de la fonction de Green de $\Pan_1$ par
Schoen \cite{Schoen-solYamabe}. Un résultat de Raulot et Humbert dans le cas conformément plat
\cite{RaulotHumbert-massepos} s'étend ici et traite le cas de l'opérateur de Paneitz $\Pan_2$.
\begin{theoImassepos}Si $M$ est de dimension $5$, $6$ ou $7$ et si la classe conforme $[g_{0}]$
vérifie:
\bi
\item l'invariant de Yamabe $Y([g_{0}])$ est strictement positif,
\item l'opérateur de Paneitz-Branson $\Pan_2^{g_0}$ est inversible,
\item et la fonction de Green $G_2^{g_0}$ est partout strictement positive,
\ei
alors pour tout $p\in M$ et toute métrique $g\in [g_{0}]$, la masse
de l'opérateur $\Pan_2^g$ vérifie
$$A_{p}^g\geq 0$$
et la nullité en un point implique que $(M,[g_{0}])$ est conformément équivalente à la sphère
$\Sph^n$ munie de la classe conforme de sa métrique ronde.
\end{theoImassepos}

Le terme de \og{}masse\fg{} employé ici provient du lien, observé par Schoen \cite{Schoen-solYamabe}, entre le coefficient
$A$ apparaissant pour la fonction de Green $G_1$ du laplacien con\-forme $\Pan_1$, et la masse ADM (pour
Arnowitt, Deser et Misner \cite{ADM-mass})
de la variété non compacte $\hat{M}$ munie de la métrique asymptotiquement plate $\hat{g}=\vphi^{4/(n-2)}g$,
où $\vphi$ est la fonction partielle $G_1(p,.)$, singulière en $p$.
Rappelons qu'une variété $(\hat{M},\hat{g})$ est dite asymptotiquement
plate lorsqu'il existe un système de coordonnées
$$z=(z^i)_{1\leq i \leq n}: \hat{M}-K \xrightarrow{\;\sim\;} \R^n-B$$
sur l'extérieur d'un compact $K$, à valeurs dans un voisinage de l'infini de $\R^n$, dans lequel les
coefficients de la métrique s'écrivent
$$\hat{g}_{ij}=\delta_{ij} + O(\norm{z}^{-\tau})$$
avec $\partial_i\hat{g}_{jk} = O(\norm{z}^{-\tau-1})$ et des décroissances similaires pour les dérivées successives.
La courbure scalaire s'écrit dans cette carte
$$\Scal^{\hat{g}} =\partial_j\left(\partial_i\hat{g}_{ij} - \partial_j\hat{g}_{ii}\right) +
O(\norm{z}^{-2\tau-2})\,.$$
Le terme dominant à l'infini $\norm{z}\to\infty$ est une divergence, dont l'intégrale sur $\hat{M}$, si
elle avait un sens, serait un terme de \og{}bord à l'infini\fg{}:
$$\lim_{R\to\infty}\oint_{\set{\norm{z}=R}}\left(\partial_i\hat{g}_{ij}
-\partial_j\hat{g}_{ii}\right) d\sigma^j\,.$$
Cette expression définit la masse ADM\footnote{La définition d'Arnowitt, Deser et Misner,
ainsi que d'autres références, introduisent un facteur de normalisation. On n'en emploie pas
ici.} de $\hat{g}$. Chru\'sciel \cite{Chrusciel-mass} et Bartnik
\cite{Bartnik-mass} ont prouvé que c'est un authentique invariant lorsque la courbure
scalaire est intégrable et l'ordre $\tau$ est assez grand. On peut tenter
de copier cette définition pour fabriquer des invariants asymptotiques à partir d'autres scalaires
riemanniens, par exemple les laplaciens itérés de la courbure scalaire. Si la méthode est générale
\cite{BMich-invas}, ici elle tourne court et aboutit aux
invariants suivants:
\begin{defi} Soit $k\geq 2$. Sur une variété riemannienne $(\hat{M},\hat{g})$, on suppose
que le laplacien itéré de la courbure
scalaire $\Delta_{\hat{g}}^{k-1}\Scal^{\hat{g}}$ est intégrable relativement à la mesure riemannienne
$\ud\vol_{\hat{g}}$. On nomme \emph{masse d'ordre $k$} l'intégrale
$$m_k(\hat{g}) =\int_{\hat{M}}\Delta_{\hat{g}}^{k-1}\Scal^{\hat{g}}\, \ud\vol_{\hat{g}}\,.$$
\label{defmasseas}\end{defi}
\noindent Ces masses asymptotiques sont reliées à la masse des opérateurs GJMS définie précédemment :
\begin{theoIinvas}
Si $2k+1\leq n\leq 2k+3$ et si $\Pan_k^g$ admet une fonction de Green $G_k^g$, soit $G$ une fonction
strictement positive coïncidant avec $G_k^g(p,.)$ au voisinage d'un point $p\in M$. On définit sur
$\hat{M} = M-\set{p}$ la métrique
$$\hat{g}=G^{\frac{4}{n-2k}} g\,.$$
Alors $\hat{g}$ admet une masse d'ordre $k$ reliée à la masse de l'opérateur $\Pan_k^g$ par
$$A_p^g=\frac{n-2k}{4(n-1)}m_k(\hat{g})\,.$$
\end{theoIinvas}
\noindent L'observation de Schoen est le cas $k=1$, pour lequel $m_1$ désigne la masse ADM.\\

Ce texte s'organise de la manière suivante. Le théorème \ref{theodvpG} calculant la singularité de
la fonction de Green est établi dans la première partie. L'outil essentiel est le lemme \ref{pkralpha}
estimant $\Pan_k^g r^{2k-n}$ en un point où la métrique $g$ vérifie
\eqref{ricnormalconf}. La preuve nécessite une connaissance plus précise de l'opérateur GJMS que
son seul premier terme, explicitée dans la proposition \ref{propsuitepk}.

La partie \ref{partiecovconf} traite de l'invariance conforme énoncée par le théorème \ref{theocovconf}. La
difficulté principale provient du comportement de la distance riemannienne lors d'un changement conforme de
métriques; elle est traitée par le lemme \ref{lemmerconf}. La régularité de la densité conforme introduite
dans la définition \ref{defmassepk} fait l'objet de la proposition \ref{propreg}.

Le théorème \ref{theomassepos} est démontré dans la partie \ref{secp2}. On y calcule également
explicitement la masse des opérateurs GJMS des classes conformes canoniques des espaces sphériques
(proposition \ref{propquosphere}), permettant
de voir que la métrique de Habermann-Jost y est bien définie et qu'elle n'est en général
pas proportionnelle à la métrique ronde. Ce calcul utilise la proposition \ref{propmasserevet} sur
le comportement de la masse sous un revêtement riemannien.

Enfin, la partie \ref{partieinvas} démontre le théorème \ref{theoinvas} reliant la masse de la définition
\ref{defmassepk} et la masse asymptotique de la définition \ref{defmasseas}.

\begin{notation}
On désigne par $M$ une variété différentielle de classe $C^\infty$ compacte et connexe. Pour une
métrique riemannienne $g$, la connexion de Levi-Civita sera notée  $\nabla^g$,
le tenseur de Riemann $\Rm^g$, le tenseur de Ricci $\Ric^g$, la courbure scalaire $\Scal^g$,
\emph{etc} de manière classique. On note $\ud\vol_g$ la mesure de volume riemannien.
Souvent l'exposant ou l'indice $g$ sera omis. S'il faut faire la distinction entre deux métriques $g$ et
$\tilde{g}$,
on pourra abréger $\nabla^{\tilde{g}}$ en $\tilde{\nabla}$, et de même pour les autres objets riemanniens
ou d'autres métriques $\bar{g}$, $\hat{g}$.

On utilisera le tenseur de Schouten
\begin{align*} \Rho & =\frac{1}{n-2}\left(\Ric -\frac{\Scal}{2(n-1)} g\right)\\
\intertext{et sa trace}
\Iota & =\tr_g\Rho =\frac{\Scal}{2(n-1)} \end{align*}

Les conventions de signes sont:
\bi
\item pour le tenseur de Riemann $\Rm(X,Y) =[\nabla_X,\nabla_Y]-\nabla_{[X,Y]}$,
\item pour le laplacien $\Delta = -\nabla^i\partial_i$.
\ei
\end{notation}

\section{Singularité des fonctions de Green des opérateurs GJMS} \label{partienoyauG}

L'objectif de cette partie est de préciser la singularité du noyau de Schwartz $K$ d'une paramétrix de $\Pan_k$.
L'idée générale est de résoudre \og{}en série entière\fg{} au voisinage de $p$ l'équation
$$\Pan_k^g K(p,.) =\delta_p$$
$\delta_p$ étant la masse de Dirac en $p$. En s'inspirant du cas euclidien (où $\Pan_k=\Delta^k$) on voit
aisément que
$$\Pan_k^g\left(\frac{1}{\clap_{n,k}} r^{2k-n}\right)=\delta_p +\text{reste}$$
où $r$ est la distance à $p$ induite par $g$. On cherche donc à exprimer $K$ sous la forme
$$K(p,.) =\frac{1}{\clap_{n,k}} r^{2k-n} +\psi$$
de sorte que $\Pan_k^g\psi$ compense le \og{}reste\fg{}.

Plus exactement, on veut par un bon choix de facteur conforme rendre ce \og{}reste\fg{} aussi petit que possible,
afin de trouver aisément $\psi$. C'est l'objet de la section \ref{secpkralpha}; c'est là qu'intervient la
normalisation \eqref{ricnormalconf}.
Il est nécessaire pour cela de calculer le terme dominant de $\Pan_k-\Delta^k$, ce qui est fait dans
la section \ref{secsuitepk}. La section \ref{secdvpG} en déduit le développement de la singularité à
l'ordre souhaité.

\subsection{Second terme des opérateurs GJMS} \label{secsuitepk}

\begin{prop} \label{propsuitepk} L'opérateur GJMS d'ordre $2k$ s'écrit
\begin{align*}
\Pan_k &=\Delta^k + 2\sum_{j=0}^{k-2} (j+1)(k-1-j)\Delta^j T\Delta^{k-2-j}\\
       &\quad -\sum_{j=0}^{k-1}\left(2j+2-k-\frac{n}{2}\right)\Delta^j\Iota\Delta^{k-1-j} +U\\
\intertext{où $T$ est l'opérateur}
Tv &= 2\Rho^{ij}\nabla^2_{ij}v + (\partial^i\Iota)\partial_iv
\end{align*}
et $U$ est un opérateur différentiel de degré au plus $2k-4$.
\end{prop}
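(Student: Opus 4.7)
The plan is to compute the subleading terms of $\Pan_k$ directly from the Fefferman--Graham ambient-metric construction, keeping only the part of differential order at least $2k-3$. One may put the ambient metric in normal form $\wt g = 2\rho\,\ud t^2 + 2t\,\ud t\,\ud\rho + t^2 g_\rho$ on coordinates $(x,t,\rho)$, with $g_\rho = g - 2\rho\,\Rho + O(\rho^2)$. Then $\Pan_k f$ equals, up to a sign and a universal constant, the restriction of $\wt\Delta^k\wt f$ to $t=1$, $\rho=0$, where $\wt f(t,\rho,x) = t^{k-n/2}F(x,\rho)$ is an ambient extension of $f$, homogeneous of degree $k-n/2$ in $t$.

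The first step is to compute $\wt\Delta$ acting on functions of the form $t^\alpha F(x,\rho)$. A direct calculation in the above coordinates yields an expression of the shape
\[
\wt\Delta(t^\alpha F) = t^{\alpha-2}\bigl[\Delta^{g_\rho}F - 2\rho\,\partial_\rho^2 F + a(\alpha,\rho)\,\partial_\rho F + b(\alpha,\rho)\,F\bigr],
\]
where $a$ and $b$ are polynomial in $\alpha$ with $\rho=0$ values controlled by $\tr_g\partial_\rho g_\rho = -2\Iota$. Iterating this formula $k$ times and setting $\rho=0$ produces $\Pan_k$, with $\Delta^k$ as the leading contribution. Because the running weight $\alpha$ shifts by $-2$ at each step, its successive values $k-\tfrac{n}{2}, k-2-\tfrac{n}{2}, \dots, -k-\tfrac{n}{2}$ will be the source of the affine-in-$\alpha$ factors $2j+2-k-\tfrac{n}{2}$ appearing in the statement.

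The subleading terms of differential order $2k-2$ then come from inserting exactly one $\rho$-dependence at some intermediate stage of the iteration. Substituting $\partial_\rho g_\rho^{ij}|_{\rho=0} = 2\Rho^{ij}$ in one factor of $\Delta^{g_\rho}$ produces an operator of the form $\Rho^{ij}\nabla_i\nabla_j$ placed between $\Delta^j$ on the left and $\Delta^{k-2-j}$ on the right; the first-order piece $(\partial^i\Iota)\partial_i v$ of $T$ arises from expanding the volume density $\sqrt{\lvert g_\rho\rvert}$ together with the contracted Bianchi identity $\nabla^i\Rho_{ij} = \nabla_j\Iota$. Summing over positions for such an insertion, weighted by the running coefficients coming from the radial part, yields the combinatorial factor $2(j+1)(k-1-j)$. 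In parallel, the scalar coefficient $b(\alpha,\rho)$ gives $\Iota$-insertions weighted by affine functions of the running $\alpha$, aggregating to the $\Iota$-terms with coefficients $-(2j+2-k-\tfrac{n}{2})$. All remaining contributions, including commutators of $\Delta$ with $\Rho^{ij}\nabla_i\nabla_j$ or $\Iota$ and all terms involving higher-order $\rho$-coefficients of the ambient extension $F$, are of differential order at most $2k-4$ and land in $U$. The principal obstacle will be the combinatorial bookkeeping of these coefficients, and in particular isolating the first-derivative term inside $T$ with exactly the correct coefficient.
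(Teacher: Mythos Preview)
Your approach is essentially the paper's: both compute $\Pan_k$ from the ambient Laplacian in the normal form \eqref{ambiantenormale}, iterate $k$ times tracking the shifting homogeneity weight, and identify the order-$(2k-2)$ contribution as coming from a single $\rho$-insertion (either $\partial_\rho\Delta_{g_\rho}|_{\rho=0}=T$ or the zeroth-order $\Iota$ term), with everything else landing in $U$.

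Two points will make the bookkeeping cleaner and should be made explicit. First, since $\Pan_k f$ is independent of the ambient extension, choose $F(x,\rho)=f(x)$ \emph{independent of $\rho$}; then all $\partial_\rho$'s in the radial operator $\tilde D_w$ act only on the $\rho$-dependence of $\Delta_{g_\rho}$, and the combinatorics reduces to a double sum over the position of the single $\tilde D$ and the position of the $\Delta_{g_\rho}$ it differentiates, giving $2(j+1)(k-1-j)$ directly. Second, check your signs against the paper's conventions: here $\partial_\rho g_\rho|_{\rho=0}=+2\Rho$ (so $\tr_g\partial_\rho g_\rho=+2\Iota$, not $-2\Iota$) and the radial piece of $\tilde\Delta$ carries $+2\rho\,\partial_\rho^2$; the first-order part of $T$ then falls out of $\partial_\rho\Delta_{g_\rho}$ via the contracted Bianchi identity $\delta_g\Rho=-d\Iota$, exactly as you anticipate.
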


\begin{proof}
On utilise la définition de \cite{GJMS} des opérateurs $\Pan_k$ à l'aide de la métrique ambiante
de Fefferman et Graham \cite{FeffermanGraham-confinv,FeffermanGraham-ambmet}. Rappelons-en brièvement les
propriétés qui seront pertinentes ici.

Si $g$ est une métrique sur $M$, on pose $\mathcal{G}\to M$ le
$\R^\times_+$-fibré principal des métriques de la classe conforme $[g]$. La métrique ambiante $\tilde{g}$ est
une métrique sur $\tilde{\mathcal{G}}=\mathcal{G}\times\R$ qui s'écrit dans des coordonnées locales adaptées
$(t,x^i,\rho)$:
\be\tilde{g} = t^2g_{\rho,ij}dx^idx^j+2\rho dt^2+2tdtd\rho \label{ambiantenormale} \ee
où $\rho$ est la coordonnée dans le facteur $\R$, $t$ une coordonnée homogène sur la fibre du facteur
$\mathcal{G}$, et $(x^i)$ un jeu local de coordonnées sur $M$. Ici $g_\rho = g_{\rho,ij} dx^idx^j$ est une famille
lisse à un paramètre de métriques sur $M$, telle que $g_0=g$.
Le jet de $\tilde{g}$ le long de $\mathcal{G} \simeq \mathcal{G}\times \set{0} \subset \tilde{\mathcal{G}}$ est
formellement déterminé à un ordre élevé (dépendant de $n$) par l'équation $\Ric^{\tilde{g}}=0$.

Si $u \in C^\infty(M)$ est prolongée arbitrairement en
$(x,\rho) \mapsto \tilde{u}(x,\rho)$ sur $M\times \R$,
$$\Pan_k^gu=\Delta_{\tilde{g}}^k(t^{k-\frac{n}{2}}\tilde{u})|_{\rho=0,t=1}$$
ne dépend que de la partie formellement déterminée du jet de $g$ pour tout $k$ si $n$ est impair, et pour $k\leq n/2$
si $n$ est pair; dans ces cas il est indépendant du choix de $\tilde{u}$.

Le laplacien se calcule aisément dans la forme \eqref{ambiantenormale} de la métrique ambiante $\tilde{g}$,
voir \cite{GJMS} qui diffère par la convention de signe pour le laplacien:
\[ \tilde{\Delta}  =  t^{-2}\left[ \Delta_{g_\rho} + 2 \rho \partial_\rho^2
    + \left(2 \rho \frac{\partial_\rho \sqrt{g_\rho}}{\sqrt{g_\rho}} - 2 w - n +2 \right) \partial_\rho
    -w \frac{\partial_\rho \sqrt{g_\rho}}{\sqrt{g_\rho}} \right] \]
sur l'ensemble des fonctions homogènes en $t$ de degré $w$. Ici $\sqrt{g_\rho}$ désigne la racine carrée
du déterminant de la matrice $(g_{\rho,ij})$.

Notons $\tilde{D}_w= 2 \rho \partial_\rho^2 + \left(2 \rho \frac{\partial_\rho \sqrt{g_\rho}}{\sqrt{g_\rho}}
- 2 w - n +2 \right) \partial_\rho -w \frac{\partial_\rho \sqrt{g_\rho}}{\sqrt{g_\rho}}$. Alors:
\begin{align} \Pan_k^gu & = \left.\left(\Delta_{g_\rho} + \tilde{D}_{k-2(k-1)-\frac{n}{2}}\right)
   \dotsm\left(\Delta_{g_\rho} + \tilde{D}_{k-2-\frac{n}{2}}\right)
   \left(\Delta_{g_\rho} + \tilde{D}_{k-\frac{n}{2}}\right)\tilde{u}\right\vert_{\rho=0} \notag \\
   & = \left[\Delta_{g_\rho}^k \tilde{u}
      +\sum_{j=0}^{k-1} \Delta_{g_\rho}^j \tilde{D}_{w_j} \Delta_{g_\rho}^{k-1-j} \tilde{u}
      +F(\Delta_{g_\rho},\tilde{D}) \tilde{u}\right]_{\rho=0} \label{dvpk1}
\end{align}
où $w_j=k-\frac{n}{2}-2(k-1-j)$, et $F(\Delta_{g_\rho},\tilde{D})$ est un polynôme non commutatif
en les $\tilde{D}_{w_j}$
et $\Delta_{g_\rho}$, de degré total $k$, et de degré partiel $k-2$ en $\Delta_{g_\rho}$.

Afin d'évaluer le membre de droite de \eqref{dvpk1}, on choisit $\tilde{u}$ indépendante de $\rho$. On observe que
$\Delta_{g_\rho}$ dérive dans les directions $\partial_{x^i}$ de $M$, et pas dans la direction $\partial_\rho$;
et inversement les $\tilde{D}_w$ ne dérivent que dans la direction $\partial_\rho$. Ainsi:
\bi
\item On a \be \Delta_{g_\rho}^k\tilde{u} \vert_{\rho=0} = \Delta_g^k
   \left(\tilde{u}\vert_{\rho=0}\right) = \Delta_g^k u\,. \label{dvpk2}\ee
\item Dans le second terme du membre de droite de \eqref{dvpk1}, les facteurs $\rho$ des $\tilde{D}_{w_j}$
disparaissent lors de l'évaluation en $\rho=0$:
\begin{subequations}
\begin{align} \Delta_{g_\rho}^j \tilde{D}_{w_j} \Delta_{g_\rho}^{k-1-j} \tilde{u} \vert_{\rho=0} & =
   (-2w_j -n+2) \Delta_g^j \partial_\rho \Delta_{g_\rho}^{k-1-j} \tilde{u}\vert_{\rho=0} \label{dvpk3}\\
   & \quad -\,w_j \Delta_{g}^j \frac{\partial_\rho \sqrt{g_\rho}}{\sqrt{g_\rho}} \Delta_g^{k-1-j}
       \tilde{u} \vert_{\rho=0}\,. \label{dvpk4}
\end{align} \end{subequations}
\bi
\item[$\circ$] On développe la partie droite de la ligne \eqref{dvpk3} à l'aide de la règle de Leibniz,
avec $\partial_\rho \tilde{u}=0$:
\[\Delta_g^j \partial_\rho \Delta_{g_\rho}^{k-1-j} \tilde{u}\vert_{\rho=0} = \Delta_g^{j}
\sum_{l=0}^{k-2-j}\Delta_g^l\left[\partial_\rho \Delta_{g_\rho}\right]_{\rho=0} \Delta_g^{k-2-j-l} u\,.\]
La dérivée du laplacien est (\emph{cf.} \cite{Besse}, 1.184)
\begin{align*} \left(\partial_\rho \Delta_{g_\rho}\right) v & = g_\rho\left(\partial_\rho g_\rho, \nabla^2_gv\right)
   - g_\rho\left(dv, \delta_g (\partial_\rho g_\rho) + \frac{1}{2} d \tr_g(\partial_\rho g_\rho)\right)\\
   & = 2g(\Rho,\nabla^2_gv) + g(dv,d\Iota) = Tv \text{ en } \rho=0
\end{align*}
car $\partial_\rho g_\rho \vert_{\rho=0} = 2 \Rho$, \emph{cf.} \cite{FeffermanGraham-ambmet}, et d'après
l'identité de Ricci contractée $\delta_g \Rho = -d\Iota$.
\item[$\circ$] Comme $\frac{\partial_\rho \sqrt{g_\rho}}{\sqrt{g_\rho}} =
  \frac{1}{2} \tr_{g_\rho} \left(\partial_\rho g_\rho\right)$,
le terme de la ligne \eqref{dvpk4} égale
$$ -w_j \Delta^j_g \Iota \Delta_g^{k-1-j}u\,. $$
\ei
Ainsi le deuxième terme du membre de droite de \eqref{dvpk1} vaut:
\begin{multline}
\sum_{j=0}^{k-1}\left[(-2w_j-n+2)\sum_{l=0}^{k-j-2}\Delta_g^{j+l}T\Delta_g^{k-2-j-l}
     -w_j\Delta_g^j\Iota\Delta_g^{k-1-j}\right]\\
= 2\sum_{j=0}^{k-1}(j+1)(k-j-1) \Delta_g^j T \Delta_g^{k-2-j}
- \sum_{j=0}^{k-1}(2j+2-k-\frac{n}{2}) \Delta_g^j \Iota \Delta_g^{k-1-j}\,. \label{dvpk5}
\end{multline}
\item Développant de même les dérivations par rapport à $\rho$ du terme $F(\Delta,\tilde{D})$
de \eqref{dvpk1} à l'aide de la règle de Leibniz, et évaluant en $\rho=0$, on obtient un polynôme non
commutatif de degré $k-2$ en $\Delta_g$ et les dérivées $\partial_\rho^j(\Delta_{g_\rho})|_{\rho=0}$,
qui sont tous des opérateurs de degré au plus $2$. Ce terme fournit
donc un opérateur différentiel $U$ de degré au plus $2k-4$.
\ei
La proposition \ref{propsuitepk} résulte de ce dernier constat, \eqref{dvpk2} et \eqref{dvpk5} substitués dans
\eqref{dvpk1}.\end{proof}

\subsection{Estimation de %
\texorpdfstring{$\Pan_k r^{2k-n}$}{Pk r2k-n}} \label{secpkralpha}

Soit un point $p \in M$. On se place en coordonnées géodésiques en $p$, notées $(x^i)$, et on
note $r = \sqrt{\sum x^i{}^2}$ la distance à $p$ induite par $g$.

On introduit les espaces 
$\Homg_\alpha$ des fonctions $C^\infty$ sur $M -\set{p}$ qui sont homogènes de degré $\alpha$ en les $x^i$
au voisinage de $p$.
On notera $\grandO_\alpha$  une somme finie
$\sum_{j \in \N} \vphi_j h_{\alpha+j}$
où $h_{\alpha+j} \in \Homg_{\alpha+j}$ et $\vphi_j\in C^\infty(M)$.
On notera $\grandO_{\alpha,\ln}$ une somme
$P_\alpha + Q_\alpha \ln r$
où $P_\alpha,Q_\alpha$ sont $\grandO_\alpha$.
Ces deux notations seront utilisées de la même manière que $O(r^\alpha)$ et $O(r^\alpha \ln r)$. Elles
présentent l'avantage que, si $D$ est un opérateur différentiel linéaire de degré $d$ à coefficients $C^\infty$
sur $M$,
$D\grandO_\alpha$ est $\grandO_{\alpha-d}$, et $D\grandO_{\alpha,\ln}$ est
$\grandO_{\alpha-d,\ln}$.

Dans cette section, on n'utilisera
la notation $O(r^j)$ que pour des fonctions $C^\infty$ sur $M$ toute entière; ainsi un produit
$O(r^j)\grandO_\alpha$ est $\grandO_{\alpha+j}$.
En particulier, en notant $\Delta_0=-\sum \partial_i^2$ le laplacien de coordonnées, on a
$$\Delta_g=\Delta_0 + (\delta^{ij}-g^{ij})\partial_i\partial_j + g^{ij}\Gamma_{ij}^k\partial_k\,,$$
où $\delta^{ij}-g^{ij} = O(r^2)$ et les coefficients de Christoffel $\Gamma_{ij}^k$ sont $O(r)$. Donc si
\begin{align}
f &\in \grandO_{\alpha} && \text{(\resp{} } f \in \grandO_{\alpha,\ln}\text{),}\notag\\
\intertext{alors}
\Delta_g f &= \Delta_0 f + \grandO_{\alpha} &&
\text{(\resp{} }\Delta_g f = \Delta_0 f+\grandO_{\alpha,\ln}\text{),} \notag\\
\intertext{(avec \resp{} $\Delta_0 f =\grandO_{\alpha-2}$ ou $=\grandO_{\alpha-2,\ln}$) et par une récurrence
élémentaire}
\Delta_g^kf&=\Delta_0^kf +\grandO_{\alpha-2k+2} && \text{(\resp{} }\Delta_g f = 
\Delta_0^k f+\grandO_{\alpha-2k+2,\ln}\text{)} \label{deltak}
\end{align}
(avec \resp{} $\Delta_0^kf =\grandO_{\alpha-2k}$ ou $=\grandO_{\alpha-2k,\ln}$).

Le laplacien est donc assez proche du laplacien de coordonnées. Sous les conditions \eqref{ricnormalconf},
il l'est encore plus lorsqu'il est appliqué à une fonctions de $r$ seul, et $\Pan_k$ ressemble également beaucoup
à $\Delta_0^k$. Seul le cas suivant est fondamental pour la suite:

\begin{lemm} \label{pkralpha} Si $g$ est normale conforme à l'ordre $4$ en $p$,
alors au sens des distributions sur $M$:
$$\Pan_k^gr^{2k-n} = \clap_{n,k}\delta_p + \grandO_{4-n}\,.$$
\end{lemm}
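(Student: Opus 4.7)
Je partirais de la d\'ecomposition fournie par la proposition~\ref{propsuitepk}, qui exprime $\Pan_k^g$ comme $\Delta_g^k$ plus des termes faisant intervenir $T$, $\Iota$ et un op\'erateur $U$ de degr\'e au plus $2k-4$. L'id\'ee est de montrer que le terme principal $\Delta_g^k r^{2k-n}$ fournit la singularit\'e attendue $\clap_{n,k}\delta_p$ modulo $\grandO_{4-n}$, tandis que chacun des autres termes appliqu\'e \`a $r^{2k-n}$ est lui aussi $\grandO_{4-n}$ gr\^ace aux hypoth\`eses~\eqref{ricnormalconf}.

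Je d\'egagerais d'abord trois cons\'equences de~\eqref{ricnormalconf} en coordonn\'ees g\'eod\'esiques en $p$. D'une part, $\Scal(p) = 0$ et, en prenant la trace de $(\nabla\Ric)^{\sym}(p)=0$ via l'identit\'e de Bianchi contract\'ee, $d\Scal(p)=0$; donc $\Iota = O(r^2)$. D'autre part, $\Rho(p)=0$, et la contraction $\Rho^{ij}(x)\,x^ix^j$ ne d\'epend du $1$-jet de $\Rho$ en $p$ que par sa part totalement sym\'etrique, qui s'annule elle aussi sous~\eqref{ricnormalconf}; d'o\`u $\Rho^{ij}x^ix^j = O(r^4)$. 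Enfin, le d\'eveloppement classique de l'\'el\'ement de volume $\sqrt{\det g} = 1 - \tfrac{1}{6}\Ric_{ij}x^ix^j - \tfrac{1}{12}(\nabla\Ric)^{\sym}_{ijk}x^ix^jx^k + O(r^4)$ entra\^\i ne $\log\sqrt{\det g} = O(r^4)$.

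Le c{\oe}ur du calcul est alors $\Delta_g^k r^{2k-n}$. Pour toute fonction radiale $f(r)$, la forme de Gauss--polaire donne $\Delta_g f(r) = \Delta_0 f(r) - f'(r)\partial_r\log\sqrt{\det g}$; combin\'ee \`a la troisi\`eme propri\'et\'e ci-dessus, elle am\'eliore de deux ordres l'estimation g\'en\'erique~\eqref{deltak}, fournissant $\Delta_g r^\alpha = \Delta_0 r^\alpha + \grandO_{\alpha+2}$. Une r\'ecurrence \'el\'ementaire produit alors $\Delta_g^m r^{2k-n} = c_m\, r^{2(k-m)-n} + \grandO_{2(k-m)-n+4}$ pour $m \leq k-1$, avec $c_m$ les constantes qui apparaissent dans le cas euclidien. \`A la $k$-i\`eme application, l'identit\'e distributionnelle $\Delta_0 r^{2-n} = (n-2)\Vol(\Sph^{n-1})\delta_p$ et le calcul du produit des $c_m$ fournissent $\Delta_g^k r^{2k-n} = \clap_{n,k}\delta_p + \grandO_{4-n}$.

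Il reste \`a traiter par comptage de degr\'es en $r$ les termes secondaires de la proposition~\ref{propsuitepk}. Le terme $Ur^{2k-n}$ est $\grandO_{4-n}$ puisque $U$ est de degr\'e au plus $2k-4$ \`a coefficients lisses. Les termes en $\Iota$ sont contr\^ol\'es par $\Iota = O(r^2)$: chaque $\Delta_g^j\Iota\Delta_g^{k-1-j}r^{2k-n}$ est $\grandO_{4-n}$. Pour les termes en $T$, le d\'eveloppement $\nabla^2_{ij}r^\alpha = \alpha\delta_{ij}r^{\alpha-2} + \alpha(\alpha-2)x^ix^j r^{\alpha-4} + \grandO_\alpha$ donne, apr\`es contraction avec $\Rho^{ij}$, les termes $\alpha\Iota r^{\alpha-2} = O(r^\alpha)$ et $\alpha(\alpha-2)\Rho^{ij}x^ix^j r^{\alpha-4} = O(r^\alpha)$, pr\'ecis\'ement gr\^ace aux deux premi\`eres propri\'et\'es ci-dessus; donc $Tr^\alpha = \grandO_\alpha$ et l'on conclut comme pour les termes en $\Iota$. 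Le point le plus d\'elicat, sur lequel je serais le plus vigilant, est justement que le gain fourni par~\eqref{ricnormalconf} est \emph{exactement} celui qu'il faut pour absorber tous les termes annexes jusqu'\`a $\grandO_{4-n}$, sans perte.
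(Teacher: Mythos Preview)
Your proposal is correct and follows essentially the same approach as the paper: decompose $\Pan_k^g$ via Proposition~\ref{propsuitepk}, use \eqref{ricnormalconf} to obtain $\Iota=O(r^2)$, $\Rho_{ab}x^ax^b=O(r^4)$ and $\det g = 1+O(r^4)$, then exploit the improved radial estimate $\Delta_g r^\alpha = \Delta_0 r^\alpha + \grandO_{\alpha+2}$ and degree-count each piece. Two small points to tighten: you should also mention the $(\partial^i\Iota)\partial_i$ part of $T$ (it is $\grandO_\alpha$ on $r^\alpha$ since $\partial\Iota=O(r)$), and when you write ``on conclut comme pour les termes en $\Iota$'' for the $T$-terms, be explicit that you apply your gain $Tr^\alpha=\grandO_\alpha$ only to the \emph{radial} leading part $c_{k-2-j}r^{2j+4-n}$ of $\Delta_g^{k-2-j}r^{2k-n}$, while the $\grandO_{2j+8-n}$ remainder is handled by the generic degree bound for $\Delta_g^j T$ --- the paper does exactly this splitting.
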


\begin{proof}

On utilise l'écriture de $\Pan_k$ donnée par
la proposition \ref{propsuitepk}.

L'opérateur $U$ étant de degré au plus $2k-4$, on a
\be Ur^{2k-n} = \grandO_{4-n}\,. \label{pknormalconf0} \ee
D'après l'identité de Bianchi contractée, les conditions \eqref{ricnormalconf} impliquent que\linebreak
$\Scal^g(0)=0$ et $d\Scal^g(p)=0$, donc $\Scal^g=O(r^2)$. Donc
\be \Delta^j \Iota \Delta^{k-j-1} r^{2k-n} =
\Delta^j\left(O(r^2) \grandO_{2j+2-n}\right) = \grandO_{4-n}\,. \label{pknormalconf1} \ee

Considérons maintenant $\Delta^l r^\alpha$.
Sur une fonction radiale $f=f(r)$, en dehors de l'origine $r=0$ on a:
\begin{align*} \Delta_gf & = -\partial_r^2f - \frac{n-1}{r}\partial_rf
      - \frac{\partial_r \sqrt{g}}{\sqrt{g}} \partial_rf\\
    & = \Delta_0f - \frac{\partial_r \sqrt{g}}{\sqrt{g}}\partial_r f
\end{align*}
(où $\sqrt{g}$ désigne la racine carrée du déterminant de la matrice $(g_{ij})$ dans les coordonnées normales
cartésiennes). Or d'après \cite[§5]{LeeParker-Yamabeprob}, les conditions \eqref{ricnormalconf} assurent que
$$\det(g_{ij}) = 1 + O(r^4)\,.$$
Donc $\frac{\partial_r \sqrt{g}}{\sqrt{g}} \in \grandO_3$. Donc
$$\Delta_gr^\alpha =\Delta_0r^\alpha + \grandO_{\alpha+2}\,.$$
Par récurrence, en utilisant \eqref{deltak} et le fait que $\Delta_0^lr^\alpha =  C_{l,\alpha}r^{\alpha-2l}$
si $\alpha > 2l-n$
(la valeur de la constante $C_{l,\alpha}$ est sans importance), on déduit
que
\be \Delta_g^l r^\alpha = \Delta_0^l r^\alpha + \grandO_{\alpha-2l+4}\label{lapnormalconf}\ee
si $\alpha-2l>-n-2$. Cette égalité est à comprendre au sens des distributions lorsque $\alpha \leq 2l-n$.

En particulier,
\be \Delta^jT\Delta^{k-j-2}r^{2k-n} = \Delta^j T\left(C_jr^{2j+4-n} + \grandO_{2j+8-n}\right)
\label{tralpha0} \ee
(la valeur de la constante $C_j$ est ici aussi sans importance). Comme $\Delta^jT$ est de degré au plus $2j+2$,
on a
\be \Delta^jT\grandO_{2j+8-n} = \grandO_{6-n}\,. \label{tralpha11} \ee
Rappelons que $T=2\Rho^{ab}\nabla^2_{ab} +(\partial^a\Iota)\partial_a$. De $\partial_a\Iota=O(r)$, on déduit
\be \Delta^j\left[(\partial^a\Iota) \partial_ar^{2j+4-n}\right] = \grandO_{4-n}\,. \label{tralpha12} \ee
De plus
\begin{align} \Rho^{ab}\nabla^2_{ab}r^\alpha & = \Rho^{ab}\partial_a\partial_br^\alpha
         - \Rho^{ab}\Gamma_{ab}^c\partial_cr^\alpha \notag \\
     & = \alpha r^{\alpha-2}\Rho^{ab}\delta_{ab} + \alpha(\alpha-2)r^{\alpha-4}\Rho_{ab}x^ax^b
         + \grandO_{\alpha+1}\,. \label{tralpha1}
\end{align}
où puisque $\Rho_{ab}=O(r)$ et $\Gamma_{ab}^c=O(r)$, on a
$\Rho^{ab}\Gamma_{ab}^c\partial_cr^\alpha=\grandO_{\alpha+1}$. On a aussi
\be \Rho^{ab}\delta_{ab} = \Iota + \Rho^{ab}(\delta_{ab}-g_{ab}) = O(r^2)\,. \label{tralpha2} \ee
De plus
$$ \Rho_{ab}x^ax^b = \Rho_{ab}(p)x^ax^b + \left(\nabla_c \Rho_{ab}\right)(p)x^cx^ax^b + O(r^4)\,.$$
La première condition de \eqref{ricnormalconf} impose $\Rho(p)=0$. La seconde, avec l'identité de Bianchi,
implique que $\left(\nabla\Rho\right)^\sym(p)=0$, donc que $\left(\nabla_c \Rho_{ab}\right)(p)x^cx^ax^b
\equiv 0$. Ainsi
\be \Rho_{ab}x^ax^b=O(r^4)\,.\label{tralpha3}\ee
Reportant \eqref{tralpha2} et \eqref{tralpha3} dans \eqref{tralpha1}, on obtient
$$\Rho^{ab}\nabla^2_{ab}r^\alpha = \grandO_{\alpha}$$
d'où
\be \Delta^j\left(\Rho^{ab}\nabla_{ab}^2r^{2j+4-n}\right) = \grandO_{4-n}\,.\label{tralpha13}\ee
Cette dernière observation \eqref{tralpha13}, avec \eqref{tralpha11} et \eqref{tralpha12}, reportés dans
\eqref{tralpha0}, permet de conclure que
\be \Delta^jT\Delta^{k-j-2}r^{2k-n} = \grandO_{4-n}\,.\label{snormalconf}\ee

D'après \eqref{lapnormalconf} spécialisé à $l=k$ et $\alpha=2k-n$, sur un voisinage de $p$
$$\Delta_g^kr^{2k-n} = \clap_{n,k}\delta_p + \grandO_{4-n}\,.$$
Utilisant alors \eqref{snormalconf}, \eqref{pknormalconf0} et \eqref{pknormalconf1}, avec la forme de
$\Pan_k$ donnée par la proposition \ref{propsuitepk}, on obtient le lemme \ref{pkralpha}.\end{proof}

\subsection{Singularité} \label{secdvpG}

Opérateur elliptique, $\Pan_k$  admet des paramétrix, qui en sont des inverses modulo
opérateur régularisant. 

\begin{theo} \label{theodvpG}
Si au point $p \in M$, la métrique $g$ vérifie \eqref{ricnormalconf}, alors
le noyau de Schwartz $K$ de toute paramétrix de $\Pan_k^g$ admet au voisinage de $p$
le développement asymptotique suivant:
\[ K(p,x) = \begin{cases} \clap_{n,k}^{-1}r^{2k-n} + A + o(1) & \text{si }
            2k+1 \leq n \leq 2k+3\\[2pt]
      \clap_{n,k}^{-1}r^{-4} + B \ln r + O(1) & \text{si }n=2k+4 \\[2pt]
      \clap_{n,k}^{-1}r^{2k-n} + O(r^{2k-n+4}) & \text{si } n \geq 2k+5 \,,
   \end{cases}\]
où $\clap_{n,k}$ est défini dans l'introduction.

Le $o(1)$ apparaissant dans ce développement est tel que $\partial^Jo(1) = o(r^{-\abs{J}})$ pour tout
multi-indice $J$, et de même $\partial^JO(r^\alpha) = O(r^{\alpha-\abs{J}})$.
\end{theo}

\begin{rema}\label{rqdvpG}
\begin{enumerate}[(i)] 
\item \label{rqdvpG1} Comme expliqué au début de la partie, ce \da{} est une conséquence du fait que
$$\Pan_k^gr^{2k-n} = \clap_{n,k} \delta_p + O(r^{4-n})\,.$$
C'est pourquoi la normalisation conforme \eqref{ricnormalconf} est indispensable.
\item Il y a un \da{} analogue si $n=2k$:
$$K(p,x) = c \ln r + A + o(1)\,.$$
Le coefficient $A$ apparaissant lorsque $K$ est le noyau de Schwartz
du pseudo-inverse défini par
$$\Pan_k^gK=K\Pan_k^g = \Id - \Pi_{\ker \Pan_k}$$
a été étudié dans \cite{Steiner-thesis}. Il ne présente pas l'invariance conforme
du théorème \ref{theocovconf} (voir aussi la remarque
\hyperref[rqcovconf.noncov]{\ref*{rqcovconf}-(\ref*{rqcovconf.noncov})}).
\item Pour le cas $n=2k+4$, il est facile de voir que le coefficient $B$ de $\ln r$ est un invariant
conforme de poids $-2$, donc s'écrit $c_n \abs{W}^2$: voir \cite{Ponge-preprintlogsing}. Le coefficient
$c_n$ n'est pas nul en général: par exemple dans le cas du laplacien conforme en dimension
$6$, les résultats de Lee et Parker \cite[lemme 6.4]{LeeParker-Yamabeprob} montrent que $c_6 = 1/1440$.
\item La méthode utilisée permet en théorie de poursuivre le développement asymptotique lorsque
$n \geq 2k+5$. Néanmoins, cela demande de connaître les termes de degrés inférieurs de $\Pan_k$, et de contrôler
précisément la différence entre le laplacien de Beltrami et le laplacien de coordonnées géodésiques.
L'espoir d'obtenir un développement de la forme $K=\clap_{n,k}^{-1}r^{2k-n} + A + o(1)$ étant nul
(en raison, s'il en fallait une, du terme logarithmique en dimension $n=2k+4$), les calculs ne sont pas
faits ici.
\end{enumerate} \end{rema}

\begin{proof}[Démonstration du théorème \ref{theodvpG}]
La théorie des opérateurs pseudo-différentiels assure que pour tout $l$, en coordonnées quelconques $(x^i)$ au
voisinage de $p$, $K_p(x)=K(p,x)$ admet le développement asymptotique
\be K_p(x) = \sum_{j=2k-n}^{l} q_j(x) + \sum_{j=0}^{l}p_j(x)\ln \norm{x} + \vphi(x) \label{dvpK}\ee
où $q_j \in \Homg_j$, $p_j$ est au voisinage de $p$ un polynôme homogène en les $x^i$ et $\vphi \in C^l(M)$,
voir \cite[chapitre 7, proposition 2.8]{Taylor2}.

Supposons désormais que $g$ vérifie \eqref{ricnormalconf}, et que les $x^i$ sont des coordonnées géodésiques sur
un voisinage de $p$. On commence par calculer le noyau de $\Delta_0^k$, $\Delta_0$ étant toujours le laplacien
de coordonnées. Pour cela on fixe une petite boule 
$U$ centrée en $p$ sur laquelle on supposera que les éléments de $\Homg_\alpha$ sont effectivement
homogènes.

On se restreint à $U-\set{p}$. En coordonnées sphériques, sur les fonctions de $\Homg_{\alpha}$,
$$\Delta_0 = r^{-2}\left(\Delta_\Sph-\alpha(\alpha+n-2)\right)\,,$$
où $\Delta_\Sph$ désigne
le laplacien de la sphère ronde $\Sph^{n-1}$. Notons $c_\alpha = \alpha(\alpha+n-2)$. Les valeurs propres de
$\Delta_\Sph$ sont les $c_j$ avec $j \in \N$, ce qui assure que $\Delta_0$ est non-inversible sur 
$\Homg_\alpha$ \ssi{} $\alpha \in \N$ ou $\alpha \in 2-n-\N$.
Alors sur $\Homg_\alpha$,
$$ \Delta_0^k = r^{-2k}(\Delta_\Sph - c_{\alpha-2k+2})
      (\Delta_\Sph - c_{\alpha-2k+4}) \dotsm (\Delta_\Sph - c_{\alpha-2})
      (\Delta_\Sph - c_\alpha)$$
est inversible \ssi{} $\alpha \notin \Z$ ou $2k-n < \alpha < 0$. De plus, 
le noyau de $\Delta_\Sph$ étant constitué des fonctions constantes, celui de
$\Delta_0^k|_{\Homg_{2k-n}}$ (toujours en restriction à $U-\set{p}$) est engendré par $r^{2k-n}$, et
$\ker \Delta_0^k|_{\Homg_0}$ par les fonctions constantes.\\

Pour une paramétrix $K_p$ quelconque, il existe une fonction $u \in C^\infty(M)$ telle que
\be \Pan_kK_p=\delta_p + u\,. \label{eqK}\ee
      
Le développement \eqref{dvpK} implique que
$$K_p(x) = q_{2k-n}(x) + \grandO_{2k-n+1,\ln}\,,$$
donc que
$$\Pan_k K_p(x) = \Delta_0^kq_{2k-n}(x) + \grandO_{1-n,\ln}\,.$$
En restriction à $M-\set{p}$, $\Delta_0^k q_{2k-n} \in \Homg_{-n}$. Or d'après \eqref{eqK}, en restriction à
$M-\set{p}$, $\Pan_kK_p$ est $O(1)$ au voisinage de $p$, ce qui impose
$$\Delta_0^kq_{2k-n}|_{U-\set{p}} = 0\,.$$
D'après la remarque sur le noyau de $\Delta_0^k|_{\Homg_{2k-n}}$,
on a $q_{2k-n} = c r^{2k-n}$. Le lemme \ref{pkralpha} impose la valeur $c=\clap_{n,k}^{-1}$
pour que $\Pan_kq_{2k-n} =\delta_p + \grandO_{1-n,\ln}$.

Ainsi, utilisant encore le lemme \ref{pkralpha}
\begin{align*} \Pan_k\left(K_p-q_{2k-n}\right) & = \delta_p - \delta_p +\grandO_{4-n} \\
 & = \Pan_k\left(q_{2k-n+1} + \grandO_{2k-n+2,\ln}\right)\\
 & = \Delta_0^kq_{2k-n+1} + \grandO_{2-n,\ln}
\end{align*}
On a donc $\Delta_0^k q_{2k-n+1}\in \Homg_{1-n} \cap \grandO_{2-n,\ln} = \set{0}$.
Si $2k-n+1<0$, cela impose $q_{2k-n+1}=0$ d'après les remarques sur
le noyau de $\Delta_0^k$.
En poursuivant de même, on obtient
\bi \item si $2k-n+2<0$, $q_{2k-n+2}=0$,
\item si $2k-n+3<0$, $q_{2k-n+3} = 0$. \ei
Dans le cas $n\geq 2k+4$, on a donc dans \eqref{dvpK}
$$K_p=\clap_{n,k}^{-1}r^{2k-n} + \vphi + \grandO_{2k-n+4,\ln}$$
avec $\vphi \in C^l(M)$.
Ceci prouve les cas $n=2k+4$ et $n\geq 2k+5$  du théorème, dans lequel le $O(r^{2k-n+4})$
est le $\vphi + \grandO_{2k-n+4,\ln}$ ci-dessus. L'ordre $l$ du \da{} \eqref{dvpK} étant arbitraire, on
obtient les dominations annoncées des $\partial^JO(r^{2k-n+4})$.

Dans le cas $2k+1 \leq n \leq 2k+3$, on aboutit au développement
$$K_p = \clap_{n,k}^{-1}r^{2k-n}+B \ln r + q_0 +\grandO_{1,\ln} + \vphi\,.$$
Comme
$$\Delta_0^k\ln r = -2^{k-1}(k-1)!(n-2)(n-4)\dotsm(n-2k)r^{-2k}$$ avec $-2k < 4-n$,
le même raisonnement que ci-dessus force $B=0$. De même, comme $\Delta_0^k q_0 \in \Homg_{-2k}$ avec
$-2k<4-n$, la fonction $q_0$ doit être dans le noyau de $\Delta_0^k|_{\Homg_0}$, donc être constante.
Le coefficient constant $A$ apparaissant
dans le développement est alors $q_0+\vphi(p)$. Le $o(1)$ est la suite du développement \eqref{dvpK} et vaut
$$\sum_{j=1}^l(q_j(x) + p_j(x)\ln r) + (\vphi(x)-\vphi(p)) \in \grandO_{1,\ln} + C^l\,.$$
L'ordre $l$ du développement \eqref{dvpK} étant
arbitraire, ceci fournit les dominations annoncées des $\partial^Jo(1)$.
\end{proof}

Il est à noter que cette méthode formelle permettrait, en poursuivant les calculs, de déterminer
toute la partie singulière de $K_p$, \cad{} dans \eqref{dvpK} les $p_j$ et les $q_j$
modulo fonction $C^\infty$. Ceux-ci sont donc fonctions des coefficients de l'opérateur $\Pan_k^g$, et
sont donc des invariants riemanniens locaux. Ils peuvent d'ailleurs s'obtenir en inversant le symbole
total de $\Pan_k^g$ au sens du calcul pseudo-différentiel, \emph{cf.} \cite{Taylor2}. À l'inverse,
le coefficient $A$ apparaissant dans le cas $2k+1 \leq n \leq 2k+3$ ne peut être calculé de cette manière,
et dépend de la paramétrix.

\section{Masse et densité conforme} \label{partiecovconf}

On se restreint désormais au cas $2k+1\leq n \leq 2k+3$ et $\Pan_k^g$ inversible. Sa fonction de
Green $G_k^g$ est le noyau intégral de son inverse. Le théorème \ref{theocovconf} assure que
lorsque $g$ est normale conforme à l'ordre $4$, la grandeur suivante est bien définie:
$$ A_p^g = \lim_{r\to 0} \left(G_k^g(p,x) - \clap_{n,k}^{-1}r^{2k-n}\right)$$
où $r$ est la distance géodésique de $x$ à $p$.

La section \ref{seccovconf} prouve la covariance conforme annoncée en introduction:
\begin{theo} \label{theocovconf} Si $g$ et $\tilde{g}=\vphi^{\frac{4}{n-2k}}g$ sont normales conformes
à l'ordre $4$ en $p$ (conditions \eqref{ricnormalconf}), alors
$$A_p^{\tilde{g}} = \vphi(p)^{-2}A_p^g\,.$$
\end{theo}
\begin{rema}\label{rqcovconf}
\begin{enumerate}[(i)]
\item Ce lien est fortement suggéré par la loi de transformation de $G_k^g$ sous un changement conforme
de métriques, rappelée en introduction. La difficulté provient du mauvais comportement de $r$, que
l'on contrôle avec la normalisation conforme \eqref{ricnormalconf}. C'est la raison pour laquelle
ce résultat n'est pas valide si $g$ ou $\tilde{g}$ n'est pas normale conforme.
\item \label{rqcovconf.noncov} On peut également définir une fonction de Green lorsque $\Pan_k^g$ n'est pas
inversible.
En confondant opérateur et noyau intégral pour alléger la notation, elle vérifie
$$\Pan_k^g G_k^g = G_k^g\Pan_k^g = \Id - \Pi^g$$
où $\Pi^g$ est la projection orthogonale, au sens $L^2$ induit par $g$, sur $\ker \Pan_k^g$. Cette projection
n'étant pas conformément covariante, $G_k^g$ ne l'est pas non plus. Pour expliciter, si $\tilde{g} =
\vphi^{\frac{4}{n-2k}}g$, alors la fonction
$$\tilde{G}(x,y) = \frac{G_k^g(x,y)}{\vphi(x)\vphi(y)}$$
est solution de
$$\Pan_k^{\tilde{g}} \tilde{G} = \tilde{G} \Pan_k^g = \Id - \vphi^{-1} \Pi^g \vphi\,.$$
L'opérateur $\vphi^{-1}\Pi^g\vphi$ est un projecteur sur $\ker \Pan_k^{\tilde{g}}$, mais pas
orthogonal au sens $L^2$ induit par $\tilde{g}$ (si $\vphi$ n'est pas constante).
On ne peut donc pas espérer étendre le théorème \ref{theocovconf} à ce cas.
\end{enumerate}
\end{rema}

La définition \ref{defmassepk} étend la définition de $A_p^g$ en une densité conforme de poids
$2k-n$, sans restriction sur $g$. La partie \ref{secreg} en établit la régularité:
\begin{prop} \label{propreg}
Pour toute métrique $g$, la fonction $p\mapsto A_p^g$ est de classe $C^\infty$.
\end{prop}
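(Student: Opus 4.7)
The strategy is to express $A_p^g$ via a smoothly varying family of conformal normalizations, then reduce the smoothness in $p$ to the smooth $p$-dependence of the Schwartz kernel of $(\Pan_k^g)^{-1}$ on a neighbourhood of the diagonal. To begin, one constructs a function $\psi\in C^\infty(M\times M,(0,\infty))$ with $\psi(p,p)=1$ and $d_x\psi(p,\cdot)|_{x=p}=0$ such that for every $p\in M$, the metric $\bar{g}_p:=\psi(p,\cdot)^{\frac{4}{n-2k}}g$ is normal conformal to order $4$ at $p$. Under these normalizations, the conditions \eqref{ricnormalconf} at $p$ prescribe the 2-jet and 3-jet of $\psi(p,\cdot)$ at $x=p$ as explicit polynomials in the 2-jet of $g$ at $p$, following the computations of \cite{LeeParker-Yamabeprob}; these jets depend smoothly on $p$, and a smooth function on $M\times M$ realizing this 3-jet along the diagonal can be built by partition of unity.

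With $\psi$ in hand, by the conformal covariance of $G_k$ and since $\psi(p,p)=1$, one has
$$A_p^g = \lim_{x\to p}\left(\frac{G_k^g(p,x)}{\psi(p,x)} - \clap_{n,k}^{-1}\bar{r}_p(x)^{2k-n}\right),$$
where $\bar{r}_p$ is the $\bar{g}_p$-geodesic distance from $p$. As $\psi$ is smooth and positive, smoothness of $A_p^g$ reduces to the existence of a smooth extension to $M\times M$ of the function $G_k^{\bar{g}_p}(p,x) - \clap_{n,k}^{-1}\bar{r}_p(x)^{2k-n}$. To achieve this, one iterates the construction of Theorem \ref{theodvpG} while tracking $p$-dependence: for any prescribed regularity order $j$, set
$$K(p,x) = \clap_{n,k}^{-1}\chi(\bar{r}_p(x))\bar{r}_p(x)^{2k-n} + (\text{higher-order corrections vanishing at } x=p),$$
with $\chi$ a smooth cutoff, chosen so that $\Pan_k^{\bar{g}_p}K(p,\cdot)-\delta_p = f(p,\cdot)$ lies in $C^j(M)$ with smooth dependence on $(p,x)$. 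Then $R(p,\cdot):=G_k^{\bar{g}_p}(p,\cdot)-K(p,\cdot)$ satisfies $\Pan_k^{\bar{g}_p}R(p,\cdot)=-f(p,\cdot)$ and is $C^{j+2k}$ in $(p,x)$ by elliptic regularity with parameters (cf.\ \cite{Taylor2}). Letting $j\to\infty$ and using $A_p^g=R(p,p)$ (the higher-order corrections in $K$ vanishing at $x=p$ by construction) yields the claim.

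The main obstacle is the last, iterative step: it requires the computation of further terms of $\Pan_k^{\bar{g}_p}$ beyond Proposition \ref{propsuitepk}, and a parameterized version of the arguments of Section \ref{secdvpG} ensuring smooth dependence on $p$ of each coefficient in the asymptotic expansion. Since for each fixed $j$ only finitely many such corrections are needed, and each is a local differential expression in $\bar{g}_p$ (hence in $g$ and $\psi$) with smooth $p$-dependence, the construction can be carried out to any desired order and the proposition follows.
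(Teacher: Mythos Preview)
Your approach is sound and begins exactly as the paper does: build a smooth $\psi\in C^\infty(M\times M)$ so that each $\bar g_p=\psi(p,\cdot)^{4/(n-2k)}g$ is normal conformal at $p$, and reduce to smoothness in $p$ of the constant term in the expansion of $G_k^{\bar g_p}(p,\cdot)$. The difference lies in how that last point is established. You propose to rerun the parametrix iteration behind Theorem~\ref{theodvpG} with $p$ as a parameter, pushing the residual into $C^j$ and then invoking elliptic regularity for the smooth family $(\Pan_k^{\bar g_p})_p$ to get joint $C^{j+2k}$-regularity of the remainder $R(p,x)$; since the corrections beyond $\clap_{n,k}^{-1}\bar r_p^{2k-n}$ are homogeneous of positive degree (possibly with $\ln r$ factors), they vanish on the diagonal and $A_p^g=R(p,p)$. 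This works, though the obstacle you flag is slightly overstated: the iteration needs no further \emph{explicit} terms of $\Pan_k$ beyond Proposition~\ref{propsuitepk}, only that the lower-order coefficients are smooth, since each step merely inverts $\Delta_0^k$ on spaces of homogeneous functions. The paper instead short-circuits the whole iteration by invoking directly the structure theorem for Schwartz kernels of classical pseudodifferential operators \cite[chap.~7, prop.~2.8]{Taylor2}: near the diagonal, $G_k^g(x,y)$ is a sum of smooth-in-$x$ families of homogeneous distributions in $y-x$, log terms, and a $C^l$ remainder; dividing by $\psi_p(x)\psi_p(y)$ preserves this form with smooth $p$-dependence, and $A_p^{\bar g_p}$ is then read off as the diagonal value of the degree-$0$ coefficient (extracted by spherical averaging, since a general degree-$0$ homogeneous function has no value at the origin) plus the $C^l$ remainder, both manifestly smooth in $p$. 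The paper's route packages the parametrix bookkeeping and parameter dependence into a single citation; yours is more self-contained but longer to execute.
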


\subsection{Covariance conforme} \label{seccovconf}

On prouve ici le théorème \ref{theocovconf}.
Il s'agit de comparer $r$ et
$\tilde{r}$ dans le cas où $g$ et $\tilde{g} = \vphi^{\frac{4}{n-2k}}g$ sont toutes deux normales conformes
à l'ordre $4$ en $p$.
Cette comparaison est inspirée par une propriété
classique du groupe de Möbius, engendré par les isométries affines et les inversions polaires. En
dimension $3$ ou plus, un théorème de Liouville garantit que toute application conforme d'un ouvert
de $\R^n$ dans $\R^n$ est la restriction d'une transformation de Möbius.

\begin{prop} \label{proprconfplat}
Soit, pour  $n \geq 2$ et un ouvert $D$ de $\R^n$, une transformation de Möbius $h: D \to \R^n$:
si $\eucl$ désigne la métrique plate canonique, $h^*\eucl = \vphi^2 \eucl$. Alors
$$ \forall x,y \in D, \quad \abs{h(x)-h(y)}^2 = \vphi(x) \vphi(y) \abs{x-y}^2\,.$$
En particulier, si $0 \in D \subset \R^n$ est fixé par $h$,
$$ \forall x \in D, \quad d_{h^*\eucl}(x,0)^2 = d_\eucl(h(x),0)^2 = \vphi(0)\vphi(x) d_\eucl(x,0)^2\,.$$
\end{prop}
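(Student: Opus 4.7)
Le plan est d'exploiter la structure du groupe de M\"obius comme engendr\'e par les isom\'etries affines (translations, rotations) et les inversions polaires, et de v\'erifier l'identit\'e s\'epar\'ement sur chaque type de g\'en\'erateur, avant de montrer qu'elle se propage par composition.

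D'abord, pour une isom\'etrie affine $h$, on a $\vphi\equiv 1$ et $\abs{h(x)-h(y)} =\abs{x-y}$, donc l'identit\'e est triviale. Ensuite, pour une inversion polaire de centre $c\in\R^n$ et de rayon $R>0$, explicitement $h(x) =c+R^2\frac{x-c}{\abs{x-c}^2}$, un calcul alg\'ebrique direct donne
\[ \abs{h(x)-h(y)}^2 =\frac{R^4\abs{x-y}^2}{\abs{x-c}^2\abs{y-c}^2}\,,\]
tandis que le facteur conforme est $\vphi(x) =R^2/\abs{x-c}^2$; l'identit\'e s'en d\'eduit imm\'ediatement.

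Pour la propagation par composition, si $h=h_2\circ h_1$ avec $h_j^*\eucl =\vphi_j^2\eucl$, alors $h^*\eucl =\vphi_1^2\cdot(\vphi_2\circ h_1)^2\eucl$, autrement dit le facteur conforme de $h$ est $\vphi(x) =\vphi_1(x)\vphi_2(h_1(x))$. En appliquant l'identit\'e successivement \`a $h_1$ puis \`a $h_2$ \'evalu\'ee aux points $h_1(x),h_1(y)$, on obtient directement la formule pour $h$. Comme toute transformation de M\"obius s'\'ecrit (th\'eor\`eme de Liouville, valable pour $n\geq 2$ au niveau des transformations de M\"obius elles-m\^emes) comme composition d'un nombre fini de telles isom\'etries et inversions, l'identit\'e g\'en\'erale en r\'esulte.

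Pour la seconde assertion, on sp\'ecialise $y=0$ avec $h(0)=0$: on a alors $\abs{h(x)-h(0)}^2 =\abs{h(x)}^2 =d_\eucl(h(x),0)^2$ d'une part, et $\vphi(0)\vphi(x)\abs{x}^2 =\vphi(0)\vphi(x)d_\eucl(x,0)^2$ d'autre part. Pour identifier $d_{h^*\eucl}(x,0)$ avec $d_\eucl(h(x),0)$, il suffit d'observer que $h\colon (D,h^*\eucl)\to (h(D),\eucl)$ est une isom\'etrie riemannienne, donc localement (sur un voisinage o\`u les g\'eod\'esiques minimisantes de $(h(D),\eucl)$ co\"incident avec les segments euclidiens) les distances co\"incident. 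Il n'y a pas r\'eellement d'obstacle technique majeur; le calcul explicite pour l'inversion constitue le seul point non trivial, et il est tr\`es classique.
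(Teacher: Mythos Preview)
Your proof is correct and follows the same strategy as the paper: verify the identity on generators of the M\"obius group (isometries and inversions) and show it propagates under composition. The only differences are cosmetic---the paper uses isometries, homotheties and the single inversion $x\mapsto x/|x|^2$, handling the latter by reducing to dimension~$2$ via a complex coordinate, whereas you compute directly with an arbitrary polar inversion; note also that the decomposition into generators is the \emph{definition} of a M\"obius transformation, not a consequence of Liouville's theorem.
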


Elle est exposée par exemple par Beardon dans \cite{Beardon}. Comme elle est élémentaire mais cruciale,
en voici une courte démonstration.

\begin{proof}
De manière évidente, si cette propriété est vérifiée pour deux transformations $h_1$ et $h_2$, alors elle
l'est pour la composée $h_1 \circ h_2$. Or le groupe de Möbius est engendré par les isométries (affines) de
$\R^n$, les homothéties et une inversion, par exemple $x \mapsto x/\abs{x}^2$. Le lemme est immédiat dans les
deux premiers cas, où $\vphi$ est constante; il reste donc le cas de l'inversion. 

Celui-ci se ramène à la dimension $2$ car $h$ stabilise les
plans passant par l'origine: on considère $x,y \in \C$; la métrique est
$\eucl = \abs{dz}^2$ pour une coordonnée complexe $z$,
$h: z \mapsto 1/\bar{z}$, et $h^*\eucl = \abs{d (1/\bar{z})}^2=(\abs{z}^{-2})^2 \abs{d z}^2$,
donc le facteur conforme est $\vphi: z \mapsto \abs{z}^{-2}$. Ainsi
$$ \abs{h(y)-h(x)}^2 = \abs{\frac{1}{\bar{y}}-\frac{1}{\bar{x}}}^2
   = \abs{\frac{\bar{x}-\bar{y}}{\bar{x}\bar{y}}}^2=\vphi(x)\vphi(y)\abs{x-y}^2\,. $$
\end{proof}

La proposition \ref{proprconfplat} inspire et permet de prouver la comparaison asymptotique suivante:

\begin{lemm} \label{lemmerconf}
En dimension $3$ ou plus, si $g$ et $\tilde{g} = e^{2f}g$ vérifient \eqref{ricnormalconf} en $p$,
les distances à $p$ au sens de $g$ et de $\tilde{g}$
sont reliées par $$\tilde{r}^2 - e^{f+f(p)}r^2 = O(r^6)\,.$$
\end{lemm}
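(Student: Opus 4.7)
The plan is to work in $g$-normal coordinates $(x^i)$ centered at $p$, where $r^2 = \sum_i (x^i)^2$, and to compare $\tilde r^2$ directly with $\rho^2 := e^{f + f(p)}\,r^2$. Setting $F := \tilde r^2 - \rho^2$, the goal is $F = O(r^6)$. A direct computation shows that $\tilde r^2$ and $\rho^2$ share the same $2$-jet at $p$: using the Gauss lemma for $\tilde g$ one has $\operatorname{Hess}(\tilde r^2)(p) = 2\tilde g_p = 2 e^{2f(p)} \delta_{ij}$, and the easily computed $\operatorname{Hess}(\rho^2)(p) = 2 e^{2f(p)} \delta_{ij}$ matches it. So $F$ automatically vanishes to second order at $p$, i.e., $F = O(r^3)$, for free.

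The main idea is to exploit the eikonal equation $\tilde g^{ij}\partial_i(\tilde r^2)\,\partial_j(\tilde r^2) = 4\tilde r^2$ satisfied by $\tilde r^2$. I introduce the \emph{eikonal defect} of $\rho^2$,
\[ D := \tilde g^{ij}\partial_i\rho^2\,\partial_j\rho^2 - 4\rho^2, \]
and compute it in $g$-normal coordinates. Using $\tilde g^{ij} = e^{-2f}g^{ij}$, $\partial_i(r^2) = 2x^i$ and the Gauss identity $g^{ij}(x)x^j = x^i$, one obtains
\[ D = -4e^{2f(p)}\,r^2 \Bigl[\, e^{f-f(p)} - 1 - x^i\partial_i f - \tfrac{1}{4}\,r^2\,g^{ij}\partial_i f\,\partial_j f \,\Bigr]. \]
The strategy is then (i) to show the bracket is $O(r^4)$, so that $D = O(r^6)$, and (ii) to propagate this bound to $F = O(r^6)$ by a transport-type argument.

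For (ii), writing $\tilde r^2 = \rho^2 + F$ and expanding the eikonal equation gives
\[ 4\tilde r\,\partial_{\tilde r}F - 4F + |\nabla F|^2_{\tilde g} = D, \]
using $\nabla\tilde r^2 = 2\tilde r\,\partial_{\tilde r}$ in $\tilde g$-polar coordinates. The operator $\tilde r\,\partial_{\tilde r} - 1$ acts as multiplication by $k-1$ on homogeneous degree-$k$ Taylor monomials in $\tilde g$-normal (equivalently, at leading order, $g$-normal) coordinates, hence is invertible for $k\neq 1$. Given $F = O(r^3)$ and $D = O(r^6)$, together with the observation that $|\nabla F|^2_{\tilde g} = O(r^{2m-2})$ whenever $F = O(r^m)$ with $m\geq 3$, iteration degree by degree forces the degree-$3$, $4$, and $5$ homogeneous parts of $F$ to vanish, yielding $F = O(r^6)$.

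The main obstacle is step (i), showing the bracket in the formula for $D$ is $O(r^4)$. The degree-$0$ and degree-$1$ parts vanish by inspection. The degree-$2$ part is killed by the identity
\[ \nabla^2 f(p) = df(p)\otimes df(p) - \tfrac{1}{2}\,|\nabla f|^2(p)\,g_p, \]
which is extracted from $\Ric_g(p) = \Ric_{\tilde g}(p) = 0$ via the conformal transformation formula for the Ricci tensor: after Taylor-expanding $e^{f-f(p)}$ and substituting the identity, the degree-$2$ pieces cancel exactly. The degree-$3$ cancellation is the most delicate step: it relies on the constraint on $(\nabla^3 f)^{\sym}(p)$ imposed by $(\nabla\Ric_g)^{\sym}(p) = (\nabla\Ric_{\tilde g})^{\sym}(p) = 0$, extracted through the conformal transformation law for $\nabla\Ric$, which is a cubic expression in the first and second derivatives of $f$; matching the resulting symmetric combination of $\partial_{ijk}f(p)$ with the cubic combination appearing in the bracket requires careful algebraic bookkeeping of symmetrizations and trace terms, and is where I expect the main technical work to lie.
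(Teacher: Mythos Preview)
Your eikonal approach is correct and genuinely different from the paper's proof. The paper does not compute the bracket directly. Instead, it follows a $\tilde g$-geodesic $\sigma(t)$ from $p$, expands $r(\sigma(t))^2$ and $e^{-f}\tilde r^2=e^{-f}t^2$ to order $5$, and argues that every Taylor coefficient of the difference is a \emph{universal} polynomial (depending only on $n$) in $g(p)^{\pm1}$, $df(p)$ and $\dot\sigma(0)$: the Ricci constraints determine $\nabla^2f(p)$ and $(\nabla^3f)^{\sym}(p)$ as such polynomials, and the remaining curvature contributions drop out by symmetry. Once universality is established, the coefficients are evaluated in the conformally flat model, where $g$ and $\tilde g$ are both flat and related by a M\"obius map; there $\tilde r^2=e^{f+f(p)}r^2$ holds exactly (Proposition~\ref{proprconfplat}), so all coefficients vanish. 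Your route trades this indirection for a direct computation: the eikonal defect $D$ is explicit, and the transport equation $4\tilde r\,\partial_{\tilde r}F-4F=|\nabla F|^2_{\tilde g}-D$ (you have a harmless sign slip) cleanly propagates $D=O(r^6)$ to $F=O(r^6)$, since $\tilde r\,\partial_{\tilde r}=x^i\partial_{x^i}+O(r^2)\partial$ in $g$-normal coordinates. The cost is the degree-$3$ algebra in the bracket, which you rightly flag as the crux. Note, however, that you can short-circuit precisely that step with the paper's trick: once you observe that the degree-$3$ part of your bracket is itself a universal polynomial in $df(p)$ (via the formula for $(\nabla^3f)^{\sym}(p)$), testing it on a M\"obius change of flat metrics---where $D\equiv0$---forces it to vanish identically, with no explicit bookkeeping required.
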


\begin{rema} 
L. Habermann a montré l'égalité asymptotique à l'ordre $4$ dans \cite{Habermann-LN}, sous l'hypothèse plus
faible que $\Ric_g(p)=\Ric_{\tilde{g}}(p)$.
\end{rema}

\begin{proof}[Démonstration du lemme \ref{lemmerconf}]
Quitte à redimensionner $\tilde{g}$ par une constante, on suppose $\tilde{g}_{p}=g_{p}$, soit $f(p)=0$. On
cherche le \dl{} de $r^2-e^{-f}\tilde{r}^2$ à l'ordre $5$ en $r=0$. Utilisant des coordonnées
$g$-géodésiques en $p$, on transporte le calcul sur un ouvert de $\R^n$, $p$ étant identifié à l'origine.

Soit $t\mapsto\sigma(t)= (\sigma^i(t))_{1\leq i\leq n} \in \R^n$ une $\tilde{g}$-géodésique issue de $0$,
parcourue à vitesse $1$ (pour $\tilde{g}$).
L'essentiel de la démonstration consiste à montrer que le \dl{} recherché a pour coefficients des polynômes
en $g(0)$, $g^{-1}(0)$, $\dot{\sigma}(0)$ et $d f(0)$, dont les coefficients ne dépendent que de $n$. Notons
$\mathscr{P}$ l'ensemble des fonctions de $t$ (à valeurs tensorielles) qui s'évaluent en $0$ en de tels
polynômes. L'espace $\mathscr{P}$ est stable par produit tensoriel et contractions, mais pas par
dérivation.\\

Commençons par montrer que $\nabla^2f \in \mathscr{P}$ et $\left(\nabla^3f\right)^\sym \in \mathscr{P}$
(on rappelle que $\nabla$ désigne la connexion de Levi-Civita de $g$, et $\tilde{\nabla}$ celle de
$\tilde{g}$).

Le 2-jet de $f$ est déterminé par
\[
0 = \wt{\Ric}(0)=\Big[\Ric + (n-2)d f \otimes d f - (n-2) \nabla^2 f
    -(n-2)\abs{d f}_{g}^2 g + (\Delta_g f) g\Big](0)
\]
D'où, comme $\Ric(0)=0$, $\Delta_g f (0) = \frac{n-2}{2}\abs{d f}_{g}^2(0)$ et, puisque $n>2$,
$$\nabla^2 f(0) = d f \otimes d f (0) - \frac{1}{2}\abs{d f}_{g}^2(0) g(0).$$
Par conséquent $\nabla^2 f \in \mathscr{P}$.

Le $3$-jet de $f$ est fixé par la condition $\left(\wt{\nabla}\wt{\Ric}\right)^\sym (0) = 0$.
En tout point
\[ \wt{\nabla}{\wt{\Ric}} = \nabla\left[\Ric + (n-2) d f \otimes d f - (n-2)\nabla^2f
      -(n-2)\abs{d f}_{g}^2 g + \Delta_g f\right]
    + \left(\wt{\nabla}-\nabla\right)(\wt{\Ric})
\]
Or on a
$$\left(\wt{\nabla}-\nabla\right)(\wt{\Ric})(0) = 0$$
car $\wt{\Ric}(0) = 0$ et $\tilde{\nabla}-\nabla$ est un opérateur différentiel de degré $0$. La nullité de
$\left(\wt{\nabla}\wt{\Ric}\right)^\sym(0)$ et de $\left(\nabla \Ric\right)^\sym(0)$ assure donc que
$$(n-2)(\nabla^3f)^\sym(0)+(d\Delta_g f \otimes g)^\sym(0)$$
est un polynôme  en $g^{\pm 1}(0)$, $d f (0)$ et
$\nabla^2f(0)$.\\
Soit $L:S^3T^*M \to S^3T^*M$ l'application linéaire telle que
\begin{align*} L(\nabla^3 f)^\sym & = (n-2)(\nabla^3f)^\sym + (d \Delta_g f \otimes g)^\sym\\
   & = (n-2) (\nabla^3f)^\sym - g^{ab}(\nabla_{k}\nabla_{a}\nabla_{b}f) g_{ij}
      (d x^{i} \otimes d x^{j}\otimes d x^{k})^\sym
\end{align*}
Elle est inversible si $n>2$ (ce qui assure que le $3$-jet de $f$ est bien déterminé par
$\nabla\Ric^{\sym}(0)$ et $\wt{\nabla}\wt{\Ric}^{\sym}(0)$, \emph{cf.}
par exemple \cite[chapitre 8]{FeffermanGraham-ambmet}),
et son inverse est donc polynomial en $g^{\pm 1}$. Donc $(\nabla^3f)^\sym \in \mathscr{P}$.\\

On étudie maintenant le \dl{} de $r(t)^2=\sum_{i}\sigma^{i}(t)^2$. Ses coefficients sont combinaisons
linéaires des $\sum_{i}\sigma^{i\,(k)}(0)\sigma^{j\,(l)}(0) = g(\sigma^{(k)},\sigma^{(l)})(0)$ pour
$k+l\leq 5$. Un tel terme est nul si $k=0$ ou $l=0$, on s'intéresse donc aux $\sigma^{(j)}$ pour $j\leq 4$.

{\it\'Etude de $\ddot{\sigma}$:}
\begin{align}
\nabla_{\dot{\sigma}}\dot{\sigma} & = \wt{\nabla}_{\dot{\sigma}}\dot{\sigma}
      -2d f (\dot{\sigma})\dot{\sigma} + \abs{\dot{\sigma}}_{g}^2 \nabla f \notag\\
   & = 0 -2d f (\dot{\sigma})\dot{\sigma} + \abs{\dot{\sigma}}_{g}^2 \nabla f  \label{spp1}\\
   & = \ddot{\sigma} + \G^{a}_{ij}\dot{\sigma}^{i}\dot{\sigma}^{j}\partial_{a} \label{spp2}
\end{align}
La ligne \eqref{spp1} assure que $\nabla_{\dot{\sigma}}\dot{\sigma}
\in \mathscr{P}$; et comme $\G_{ij}^{a}(0)=0$, \eqref{spp2} implique alors que $\ddot{\sigma}\in \mathscr{P}$.

{\it\'Etude de $\dddot{\sigma}$:}\\
Prenant la dérivation covariante de \eqref{spp1} dans la direction de $\dot{\sigma}$:
\begin{equation}
\nabla_{\dot{\sigma}}\nabla_{\dot{\sigma}}\dot{\sigma} =
      -2g(\nabla_{\dot{\sigma}}\nabla f, \dot{\sigma})\dot{\sigma}
      -2g(\nabla f, \nabla_{\dot{\sigma}}\dot{\sigma})\dot{\sigma}
      -2 g(\nabla f,\dot{\sigma}) \nabla_{\dot{\sigma}}\dot{\sigma}
   + 2 g(\dot{\sigma}, \nabla_{\dot{\sigma}}\dot{\sigma})\nabla f
      + \abs{\dot{\sigma}}_{g}^2 \nabla_{\dot{\sigma}}\nabla f \label{sppp1}
\end{equation}
Comme $\nabla_{\dot{\sigma}}\dot{\sigma} \in \mathscr{P}$ et $\nabla_{\dot{\sigma}}\nabla f
=g^{-1}(\nabla^2f(\dot{\sigma},\,.\,)) \in \mathscr{P}$, on déduit $(\nabla_{\dot{\sigma}})^2
\dot{\sigma} \in \mathscr{P}$.\\
Le dérivation covariante de \eqref{spp2} suivant $\dot{\sigma}$ s'écrit
\be \nabla_{\dot{\sigma}}\nabla_{\dot{\sigma}}\dot{\sigma} =
      \dddot{\sigma} + \partial_{k}\G_{ij}^{a}\dot{\sigma}^{i}\dot{\sigma}^{j}\dot{\sigma}^{k}
      \partial_{a} + 3 \G_{ij}^{a}\ddot{\sigma}^{i}\dot{\sigma}^{j}\partial_{a}
   +\mathscr{Q}(\G,\dot{\sigma}) \label{sppp2} \ee
où $\mathscr{Q}(\G,\dot{\sigma})$ est une expression quadratique en les $\G_{ab}^c$, donc nulle à l'ordre
$2$ en $0$. Le coefficient $\partial_{k}\G_{ij}^{a}(0)$ est combinaison linéaire des
$\partial_{i}\partial_{j}g_{kl}(0)$, eux-mêmes combinaisons linéaires des $\Rm_{ijkl}(0)$ d'après le \dl{}
classique de la métrique en coordonnées normales. Donc le terme
$\partial_{k}\G_{ij}^{a}\dot{\sigma}^{i}\dot{\sigma}^{j}\dot{\sigma}^{k}\partial_a$ s'évalue
en $t=0$, à facteur multiplicatif près, en $\Rm(\dot{\sigma},\dot{\sigma},\dot{\sigma},\,.\,)(0)=0$.\\
Donc finalement, au même titre que $(\nabla_{\dot{\sigma}})^2\dot{\sigma}$,
$\dddot{\sigma} \in \mathscr{P}$.

{\it\'Etude de $\sigma^{(4)}$:}\\
La dérivée covariante de \eqref{sppp1} suivant $\dot{\sigma}$ fait apparaître
$(\nabla_{\dot{\sigma}})^3\dot{\sigma}$
comme combinaison linéaire, à coefficients entiers, des termes suivants:
\bi
\itbul $g\left(\nabla_{\dot{\sigma}} \nabla f,\nabla_{\dot{\sigma}}\dot{\sigma}\right)\dot{\sigma}$.
D'après les résultats précédents sur $\nabla^2f$ et $\nabla_{\dot{\sigma}}\dot{\sigma}$, ce terme est dans
$\mathscr{P}$.
\itbul $g\left(\nabla_{\dot{\sigma}}\nabla f,\dot{\sigma}\right) \nabla_{\dot{\sigma}}\dot{\sigma}$. De même
ce terme est dans $\mathscr{P}$.
\itbul $g\left(\nabla f, \nabla_{\dot{\sigma}}\dot{\sigma}\right) \nabla_{\dot{\sigma}}\dot{\sigma}
   \in \mathscr{P}$ de même.
\itbul $\abs{\nabla_{\dot{\sigma}}\dot{\sigma}}_g^2 \nabla f \in \mathscr{P}$ de même.
\itbul $g\left(\dot{\sigma},\nabla_{\dot{\sigma}}\dot{\sigma}\right)\nabla_{\dot{\sigma}}\nabla f \in
   \mathscr{P}$ de même.
\itbul $g\left(\nabla f,(\nabla_{\dot{\sigma}})^2\dot{\sigma}\right)\dot{\sigma}$. Utilisant de plus les
résultats précédents sur $(\nabla_{\dot{\sigma}})^2\dot{\sigma}$, on constate que ce terme est dans
   $\mathscr{P}$.
\itbul $g\left(\nabla f,\dot{\sigma}\right)(\nabla_{\dot{\sigma}})^2\dot{\sigma} \in \mathscr{P}$ de même.
\itbul $g\left(\dot{\sigma}, (\nabla_{\dot{\sigma}})^2\dot{\sigma}\right) \nabla f
   \in \mathscr{P}$ de même.
\itbul $g\left(\nabla_{\dot{\sigma}}\nabla_{\dot{\sigma}} \nabla f,\dot{\sigma}\right)\dot{\sigma} =
   \left[\nabla^3f(\dot{\sigma},\dot{\sigma},\dot{\sigma}) - \nabla^2 f(\dot{\sigma},
   \nabla_{\dot{\sigma}}\dot{\sigma})\right] \dot{\sigma}$. Comme $(\nabla^3f)^\sym \in \mathscr{P}$, d'après
   les résultats précédents, ce terme est aussi dans $\mathscr{P}$.
\itbul $\abs{\dot{\sigma}}_g^2 \nabla_{\dot{\sigma}} \nabla_{\dot{\sigma}} \nabla f$. On ne connaît que
   $(\nabla^3f)^\sym(0)$, donc il n'est pas possible de dire si ce terme est dans $\mathscr{P}$. En revanche,
   de même que ci-dessus,
   $$g\left(\abs{\dot{\sigma}}_g^2 \nabla_{\dot{\sigma}} \nabla_{\dot{\sigma}} \nabla f,
   \dot{\sigma}\right) \in \mathscr{P}\,.$$
\ei
Par conséquent, $g((\nabla_{\dot{\sigma}})^3\dot{\sigma},\dot{\sigma}) \in \mathscr{P}$; c'est ce qui importe
pour la suite.\\
De manière analogue, on peut lister les termes dont la dérivée covariante de \eqref{sppp2} suivant
$\dot{\sigma}$ est combinaison linéaire:
\bi
\itbul des termes dont les symboles de Christoffel $\G_{ab}^c$ sont en facteur (en particulier
$\nabla_{\dot{\sigma}} \mathscr{Q}(\G,\dot{\sigma})$). Ils s'annulent en $0$.
\itbul $\partial_{k}\partial_{l}\G_{ij}^{a}\dot{\sigma}^{i}\dot{\sigma}^{j}\dot{\sigma}^{k}
   \dot{\sigma}^l\partial_{a}$. En $0$, ce sont des combinaisons linéaires des dérivées secondes des
   $g_{ij}$. Or, d'après le développement limité de la métrique en coordonnées normales, calculé par exemple
   dans \cite{LeeParker-Yamabeprob}, celles-ci sont combinaisons linéaires des $\nabla_i \Rm_{jkl}\haut{a}(0)$.
   Donc en $t=0$ ces termes sont multiples de
   $\nabla \Rm(\dot{\sigma},\dot{\sigma},\dot{\sigma},\dot{\sigma},.)(0) = 0$.
\itbul $\partial_{k}\Gamma_{ij}^{a}\dot{\sigma}^{k}\dot{\sigma}^{i}\ddot{\sigma}^j\partial_{a}$,
   avec $\ddot{\sigma}$ lui-même combinaison linéaire de $\dot{\sigma}$ et $\nabla f$. Comme plus haut,
   en $t=0$ ce terme vaut
   $$A \Rm_{\dot{\sigma},\dot{\sigma}}\nabla f(0) + B \Rm_{\dot{\sigma}, \nabla f}\dot{\sigma}(0)
   = B \Rm_{\dot{\sigma}, \nabla f}\dot{\sigma}(0)$$
   (avec $A$ et $B$ réels). Ceci est orthogonal à $\dot{\sigma}(0)$.
\ei
Utilisant \eqref{sppp2} dérivée suivant $\dot{\sigma}$ et le fait que
$g((\nabla_{\dot{\sigma}})^3 \dot{\sigma},\dot{\sigma}) \in \mathscr{P}$, on déduit que
$g(\sigma^{(4)},\dot{\sigma}) \in \mathscr{P}$.

En conséquence de quoi, $r^2$ admet en $t=0$ un \dl{} dont les coefficients sont polynomiaux en $g^{\pm 1}(0)$,
$\dot{\sigma}(0)$ et $d f (0)$, avec des coefficients ne dépendant que de $n$.\\

Pour terminer, comme $\tilde{r}=t$, le \dl{} à l'ordre 5 de $e^{-f}\tilde{r}^2$ est polynomial en $\dot{f}(0)$, 
$\ddot{f}(0)$ et $\dddot{f}(0)$, où pour simplifier $\dot{f}$, $\ddot{f}$, \emph{etc.} désignent les dérivées
de $f(\sigma(t))$ par rapport à $t$.
\bi
\itbul $\dot{f} = d f (\dot{\sigma}) \in \mathscr{P}$;
\itbul à des termes en $\ddot{\sigma}$, $\dot{\sigma}$ et $d f$ près, $\ddot{f}$ vaut
   $\nabla^2f(\dot{\sigma},\dot{\sigma})$ donc $\ddot{f} \in \mathscr{P}$;
\itbul et de même, modulo des termes en les dérivées d'ordre au plus $3$ de $\sigma$, et au plus $2$ de $f$,
   $\dddot{f}$ vaut $\nabla^3f(\dot{\sigma},\dot{\sigma},\dot{\sigma})$, donc est dans $\mathscr{P}$.\\
\ei

Pour conclure, tous les coefficients du \dl{} à l'ordre 5 de $r^2-e^{-f}\tilde{r}^2$ sont polynomiaux en 
$g^{\pm 1}(0)$, $d f(0)$ et $\dot{\sigma}(0)$, avec des coefficients qui ne dépendent que de $n$. On les calcule
donc dans un cas simple, le cas conformément plat. Pour cela, on prend pour $g$ et $\tilde{g}$ deux métriques
plates. Il existe alors une application de Möbius $h$, définie au moins au voisinage de l'origine et fixant
celle-ci, telle que $\tilde{g}=h^*g$. D'après la proposition \ref{proprconfplat}, on a alors l'égalité exacte
$\tilde{r}^2 = e^{f+f(0)}r^2$. Cela assure la nullité des coefficients du \dl{}. \end{proof}

Le lemme \ref{lemmerconf} implique, si $g$ et $\tilde{g}=\vphi^{\frac{4}{n-2k}}g$ vérifient toutes deux
\eqref{ricnormalconf} en $p$, que
$$\frac{\tilde{r}(x)^{n-2k}}{\vphi(p) \vphi(x) r(x)^{n-2k}} = 1+O(r^4)$$
donc si $n-2k \leq 3$
$$\frac{1}{\tilde{r}(x)^{n-2k}} - \frac{1}{\vphi(p) \vphi(x) r(x)^{n-2k}} = o(1)\,.$$
D'où
\begin{align*} A_p^{\tilde{g}} & = \lim_{x\to p}\left(\tilde{G}_k(p,x)-\frac{1}{\clap_{n,k}
         \tilde{r}(x)^{n-2k}}\right) \\
   & = \lim_{x \to p} \frac{1}{\vphi(x)\vphi(p)} \left(G_k(p,x)-\frac{1}{\clap_{n,k}
         r(x)^{n-2k}}\right)\\
   & =\vphi(p)^{-2} A_p^g\,.
\end{align*}
C'est ce qu'énonce le théorème \ref{theocovconf}.

\subsection{Régularité} \label{secreg}

On démontre ici la proposition \ref{propreg}.

On fixe $g$ dans $[g_0]$. Pour chaque point $p$, il existe une fonction $\psi_p$ telle que
$\psi_p^{\frac{4}{n-2k}}g$ vérifie \eqref{ricnormalconf} en $p$. En fait, la condition \eqref{ricnormalconf}
détermine le $3$-jet de $\psi_p$ en fonction polynomiale de son $1$-jet, \emph{cf.} \cite[§5]{LeeParker-Yamabeprob}.
Ainsi, il existe une fonction
$\psi \in C^\infty(M\times M)$ telle que, si l'on note $\psi_p$ la fonction partielle
$x \mapsto \psi(p,x)$, pour tout point $p$ la métrique $\bas{p}g=\psi_p^{\frac{4}{n-2k}} g$ est normale
conforme à l'ordre $4$ en $p$.

D'après \cite[chapitre 7, proposition 2.8]{Taylor2}, en coordonnées au voisinage de la diagonale de $M$ on a
$$G_k^g(x,y) = \sum_{j=2k-n}^{l} q_j(x,y-x) + \sum_{j=0}^{l}p_j(x,y-x)\ln \norm{x-y} + \vphi(x,y-x)$$
où les $\left(q_j(x,.)\right)_x$ sont des familles lisses de distributions homogènes de degré
$j$, $\left(p_j(x,.)\right)_x$ des familles lisses de polynômes de homogènes, et $\left(\vphi(x,.)\right)_x$
une famille lisse de fonctions $C^l$.
La fonction de Green $\bas{p}G$ de $\Pan_k^{(\bas{p}g)}$ est reliée à $G_k^g$ par
$$\bas{p}G(x,y) = \frac{G_k^g(x,y)}{\psi_p(x) \psi_p(y)}\,.$$
Elle admet donc un \da{} de la même forme que celui de $G_k^g$, qui
dépend de manière lisse de $p$:
$$\bas{p}G(x,y) = \sum_{j=2k-n}^{l} \bas{p}q_j(x,y-x) + \sum_{j=0}^{l}
   \bas{p}p_j(x,y-x)\ln r + \bas{p}\vphi(x,y-x)\,.$$
Ce développement coïncide avec \eqref{dvpK} lorsque $x$ est fixé égal à $p$.
Donc $$A_p^{(\bas{p}g)} = \bas{p}q_0(p,0) +\bas{p}\vphi(p,0)\,.$$
\bi
\item La fonction $p\mapsto\bas{p}\vphi(p,0)$ est $C^\infty$.
\item Pour pallier le fait que $q(0)$ n'est pas défini en général pour une fonction homogène $q$ de degré $0$,
on peut écrire
$$\bas{p}q_0(p,0)=\left.\frac{1}{\Vol(\Sph^{n-1})}\oint_{\Sph^{n-1}}\bas{p}
   q_0(x,\theta) d\theta\right|_{x=p}\,.$$
Cela assure la régularité de $p \mapsto \bas{p}q_0(p,0)$. \ei
Donc $A_p^{(\bas{p}g)}$ est fonction lisse de $p$; donc $A_p^g$,
qui vaut $\psi(p,p)^2A_p^{(\bas{p}g)}$, l'est également.

\section{Résultats de masse positive}

\subsection{Métrique de Habermann-Jost de %
\texorpdfstring{$\Pan_2$}{P2}} \label{secp2}

On se restreint dans cette partie à l'opérateur de Paneitz-Branson $\Pan_2$, en dimension $5$, $6$ ou $7$.
Il est explicitement donné par la formule
\be \Pan_2^{g} u = \Delta_{g}^2u + \delta^g\left(T_{g} d u\right) + \frac{n-4}{2}Q_2^{g} \label{defP2}\ee
avec
\[
T_{g} = (n-2) \Iota - 4 \Rho = \frac{(n-2)^2 +4}{2(n-1)(n-2)}\Scal^{g}g -\frac{4}{n-2} \Ric^{g}
\]
et
\begin{align*}
Q_2^{g} & = \Delta_g\Iota +\frac{n}{2}\Iota^2-2\abs{\Rho}^2\\
   & = \frac{1}{2(n-1)}\Delta_{g} \Scal^{g} + \frac{(n^3-4n^2 + 16n-16)}{2(n-1)^2(n-2)^2} \Scal^{g\,2}
   - \frac{2}{(n-2)^2} \abs{\Ric^{g}}^2\,.
\end{align*}

Toujours dans le cas où l'opérateur $\Pan_2$ est inversible dans la classe conforme $[g_0]$,
on ajoute les hypothèses suivantes:
\begin{enumerate}[(h1)]
\item\label{h1} l'invariant de Yamabe de $(M,[g_0])$ est strictement positif,
\item\label{h2} la fonction de Green $G_2^{g_0}$ est strictement positive.
\end{enumerate}
L'hypothèse (\hyperref[h1]{h\ref*{h1}}) n'est pas restrictive dans le but de construire une métrique
canonique dans une classe
conforme, puisque dans le cas Yamabe-négatif la métrique à courbure scalaire constante fait l'affaire.
En revanche l'hypothèse technique (\hyperref[h2]{h\ref*{h2}}) équivaut au principe du maximum pour
$\Pan_2$, qui n'est
pas acquis en général. Pour des résultats sur le signe de la fonction de Green de l'opérateur biharmonique
on peut se reporter aux travaux de Grunau et ses collaborateurs, entre autres \cite{GrunauRobert-biharmGreen}.

Dans le cadre localement conformément plat, la masse $A_p^g$ de $\Pan_2^g$ en $p \in M$ peut être définie en
toute dimension supérieure ou égale à $5$, à l'aide d'une métrique $g$ plate au voisinage de $p$. Raulot
et Humbert \cite{RaulotHumbert-massepos} ont montré, si $\Pan_2$ est un opérateur défini positif et sous
les hypothèses (\hyperref[h1]{h\ref*{h1}}) et (\hyperref[h2]{h\ref*{h2}}),
que $A_p^g\geq0$, avec égalité en un point seulement si
$(M,[g_0])$ est conformément équivalente à la sphère $\Sph^n$ munie de sa classe conforme standard. Leur
argument s'étend à la masse définie par la définition \ref{defmassepk}.

\begin{theo} \label{theomassepos} Si $M$ est de dimension $5$, $6$ ou $7$, si l'opérateur
$\Pan_2$ est inversible dans la classe conforme $[g_0]$, et sous les hypothèses
(\hyperref[h1]{h\ref*{h1}}) et (\hyperref[h2]{h\ref*{h2}}) ci-dessus,
alors pour tout $p \in M$ et toute métrique $g \in [g_{0}]$, la masse
de l'opérateur $\Pan_2^g$ définie par la définition \ref{defmassepk} vérifie:
$$A_{p}^g \geq 0$$
et la nullité (en un point) implique que $(M,[g_{0}])$ est conformément équivalente à la sphère $\Sph^n$
munie de la classe conforme de sa métrique ronde.
\end{theo}

\begin{proof}
Pour alléger, comme il n'y a pas d'ambiguïté, on omet l'indice $p$. On utilise une métrique
asymptotiquement plate $\hat{g}$ obtenue à partir de $g$ par \og{}projection stéréographique\fg{}
(comme il est d'usage de la nommer). On distinguera les opérateurs et invariants associés
à $\hat{g}$ de ceux de $g$ par un chapeau.

Le signe de $Y([g_0])$ assure que le laplacien conforme
$$\Pan_1=\Delta + \frac{n-2}{4(n-1)}\Scal$$
admet une fonction de Green $G_1$ strictement positive. Posant
$H(x)=\clap_{n,1} G_1(p,x)$ pour $x \in M-\set{p}$,
le théorème \ref{theodvpG} (qui n'est ici qu'un avatar de \cite[lemme 6.4]{LeeParker-Yamabeprob}),
assure que
$$ H=\begin{cases} \frac{1}{r^{n-2}} + \grandO_{6-n,\ln} & \text{si } n\geq 6\\[2pt]
                   \frac{1}{r^3} + a + \grandO_{1,\ln} & \text{si } n=5\,. \end{cases}
$$
ce qui se condense en
\be H = \frac{1}{r^{n-2}}\big(1+ar^3 + \grandO_{4,\ln}\big) \label{daH}\ee
avec $a=0$ si $n > 5$.
On considère sur $\hat{M}=M-\set{p}$ la métrique conforme à $g$ définie par
$$\hat{g}  = H^{\frac{4}{n-2}}g = \vphi^{\frac{4}{n-4}} g \quad
   \textrm{avec }\vphi = H^{\frac{n-4}{n-2}}\,.$$

Soit $(x^i)$ un système de coordonnées sur $M$, $g$-géodésiques en $p$. On définit les coordonnées
inversées par
$$z^i=\frac{x^i}{r^2}\,.$$
Par un calcul classique
\be\partial_{z^i}=r^{2}\partial_{x^i}-2x^ix^k\partial_{x^k}\,.\label{dxdz}\ee
Comme
$g(\partial_{x^i},\partial_{x^j})x^j=x^i$ (lemme de Gauss pour les coordonnées géodésiques), on a
\be g(\partial_{z^i},\partial_{z^j}) = r^{4}g(\partial_{x^i},\partial_{x^j})\,. \label{gz} \ee
En coordonnées géodésiques,
$g(\partial_{x^i},\partial_{x^j}) = \delta_{ij}+\grandO_{2}$,
donc d'après \eqref{daH}:
\be\hat{g}(\partial_{z^i},\partial_{z^j}) = \delta_{ij} + \grandO_{2}
   + \grandO_{3,\ln}\label{asplatr}\ee
(où le $\grandO_{2}$ et le $\grandO_{3,\ln}$ sont toujours à entendre au sens des coordonnées $(x^i)$
lorsque $r \to 0$). On introduit
$$\rho = \sqrt{\sum z^{i\,2}} = \frac{1}{r} \qquad
   \textrm{et} \qquad \partial_\rho = \sum_i\frac{z^i}{\rho}\partial_{z^i}=-r^2\delr\,.$$
D'après \eqref{dxdz}, si une fonction $u$ est $\grandO_{\alpha}$ lorsque $r\to 0$ (\resp{}
$\grandO_{\alpha+1,\ln}$), alors pour tout multi-indice $J$
$$\partial_z^Ju = O(\rho^{-\alpha-\abs{J}}) \qquad \textrm{(\resp{} } o(\rho^{-\alpha-\abs{J}})\textrm{)}$$
lorsque $\rho\to\infty$. Pour alléger, on notera simplement $O(\rho^{-\alpha})$ (\resp{} $o(\rho^{-\alpha})$)
une fonction ayant une telle décroissance. L'équation \eqref{asplatr} implique alors
\be \hat{g}(\partial_{z^i}, \partial_{z^j}) = \delta_{ij} + O(\rho^{-2})\,. \label{asplatrho} \ee
La métrique $\hat{g}$ est ainsi asymptotiquement plate à l'ordre $2$.

On effectue désormais les calculs dans les coordonnées $(z^i)$: par exemple $\partial_i$ désigne
$\partial_{z^i}$, $\hat{g}_{ij}=\hat{g}(\partial_{z^i},\partial_{z^j})$, \emph{etc}.
On note
$$M_R = \set{\rho\leq R} \qquad \textrm{et} \qquad S_{R} = \partial M_R=\set{\rho=R}\,.$$
La $\hat{g}$-normale unitaire sortante de $M_R$ est, d'après \eqref{daH}:
\be\hat{n} = \rho^2H^{-\frac{2}{n-2}}\partial_\rho
   =\left(1+O(\rho^{-3})\right)\partial_\rho\,.\label{normale}\ee

Les lois de changement conforme de métrique assurent que sur $\hat{M}$:
$$\wh{\Scal} = H^{-\frac{n+2}{n-2}}\Pan_1 H = 0$$
et
$$\hat{\Pan}_2 \left(H^{-\frac{n-4}{n-2}} G_2\right)
=\vphi^{\left(-\frac{n+4}{n-4}\right)}\Pan_2 \left(\vphi H^{-\frac{n-4}{n-2}}G_2\right) =
\vphi^{\left(-\frac{n+4}{n-4}\right)} \Pan_2 G_2 = 0.$$
Posons $\hat{G} = H^{-\frac{n-4}{n-2}}G_2$. La preuve du théorème \ref{theomassepos} consiste à
évaluer
\be 0= \int_{M_{R}}\hat{\Pan}_2\hat{G} \ud\hat{\vol} \label{intpg1}\ee
lorsque $R\to\infty$.

Par intégration par parties, en utilisant la formule \eqref{defP2} pour $\Pan_2$ et la nullité de $\wh{\Scal}$:
\begin{equation}
\int_{M_R} \hat{\Pan}_2\hat{G} \ud\hat{\vol} =
      -\oint_{S_R}\hat{n}\left(\hat{\Delta} \hat{G}\right) d \hat{\sigma}
      + \frac{4}{n-2}\oint_{S_{R}}\wh{\Ric}\left(\hat{\nabla} \hat{G},\hat{n}\right) d \hat{\sigma}\\
  -\frac{n-4}{(n-2)^2} \int_{M_{R}} \abs{\wh{\Ric}}_{\hat{g}}^2 \hat{G} \ud\hat{\vol}\,. \label{intpg2}
\end{equation}
On évalue les deux premiers termes du membre de droite lorsque $R \to \infty$.\\

{\it Premier terme: }
On dispose, d'après le théorème \ref{theodvpG}, du \da
$$ G_2 = \frac{1}{\clap_{n,2} r^{n-4}} + A + \grandO_{1,\ln} =
\frac{1}{r^{n-4}}(\clap_{n,2}^{-1} + Ar^{n-4} + \grandO_{n-3,\ln})\,.$$
Ce développement asymptotique implique avec celui \eqref{daH} de $H$ que
\begin{align} \hat{G} & = \clap_{n,2}^{-1}+A\rho^{4-n}-\frac{n-4}{n-2}a\rho^{-3}
      +o(\rho^{-3})+o(\rho^{4-n})\notag\\
   & = \clap_{n,2}^{-1} + A\rho^{4-n} + o(\rho^{4-n})\label{daGhat}\end{align}
car $n \leq 7$, et $a=0$ si $n>5$.
Puisqu'on a
$$\hat{\Delta} = -\sum \partial_i^2 +(\delta^{ij}-\hat{g}^{ij})\partial_i\partial_j
     +\hat{g}^{ij}\hat{\G}_{ij}^k\partial_k\,,$$
la platitude asymptotique \eqref{asplatrho} implique, de façon analogue à la partie \ref{secpkralpha}, que
$\hat{\Delta} \hat{G} = 2(n-4) A \rho^{2-n} +o(\rho^{2-n})$,
d'où avec \eqref{normale}
$$\hat{n}(\hat{\Delta}\hat{G}) = -2(n-2)(n-4) A \rho^{1-n} + o(\rho^{1-n})\,.$$
On intègre sur $S_{R}$ munie de la forme volume $d \hat{\sigma}$. En raison de la platitude asymptotique
\eqref{asplatrho}, cette dernière est asymptotiquement égale à la forme volume euclidienne de la sphère de
coordonnées $z$, $\rho^{n-1}d\sigma_{\Sph^{n-1}}$. Par conséquent,
\be \oint_{S_{R}} \hat{n}(\hat{\Delta}\hat{G}) d \hat{\sigma} = -\clap_{n,2}A
 + o(1)\,. \label{terme1}\ee

{\it Deuxième terme: }
La platitude asymptotique \eqref{asplatrho} assure que
$$|\wh{\Ric}|_{\hat{g}} = O(\rho^{-4})\,,\quad d\hat{\sigma}=(1+o(1))\rho^{n-1}d\sigma_{\Sph^{n-1}}\,,
\quad \textrm{et }|\hat{\nabla}\hat{G}|_{\hat{g}}=O(\rho^{3-n})$$
avec \eqref{daGhat} pour la dernière égalité. On a donc
$$\oint_{S_R} \wh{\Ric}(\hat{\nabla}\hat{G},\hat{n})d\hat{\sigma} \xrightarrow[R \to \infty]{} 0\,.$$

{\it Positivité de la masse:}
Ainsi, prenant $R\to\infty$ dans \eqref{intpg1}, utilisant cette dernière limite avec \eqref{intpg2} et
\eqref{terme1}, on obtient
$$\clap_{n,2} A = \frac{n-4}{(n-2)^2} \int_{\hat M}\abs{\wh{\Ric}}^2_{\hat{g}} \hat{G} \ud\hat{\vol}$$
avec $\hat{G}=H^{-\frac{n-4}{n-2}} G_2 >0$ d'après (\hyperref[h2]{h\ref*{h2}}). Cette égalité assure
donc d'une part la convergence de l'intégrale
de droite, et d'autre part la positivité (au sens large) de $A$.\\

{\it Cas d'égalité:}
Si $A=0$, $(\hat{M},\hat{g})$ est Ricci plate. L'inégalité de Gromov-Bishop assure alors que
le volume des boules croît moins vite que celui des boules de l'espace euclidien. Plus précisément,
notant $\Vol\left(B^{\hat{g}}(x,t)\right)$ le volume de la $\hat{g}$-boule de centre $x$ et de rayon
$t$, la fonction
$$t \mapsto \frac{\Vol\left(B^{\hat{g}}(x,t)\right)}{t^n\Vol(\Sph^{n-1})}$$
est décroissante. Or tant à la limite $t\to0$ que, en raison de la platitude asymptotique
\eqref{asplatrho}, à la limite $t\to\infty$, ce quotient vaut $1$. On a donc
$$\forall t>0,\quad \Vol\left(B^{\hat{g}}(x,t)\right) = t^n\Vol(\Sph^{n-1})\,.$$
Ce cas d'égalité, traité par exemple dans \cite[§ 4.20 bis]{GHL},
implique que $(M-\set{p}, \hat{g})$ est isométrique à $\R^n$. Donc $(M,[g_{0}])$ est la sphère munie de
sa classe conforme canonique.
\end{proof}

\begin{rema} Ce calcul se généralise facilement aux dimensions supérieures. On introduit la notation
$$\Pan_m^g = \Delta_g^m + \delta_g T^g_m d + \frac{n-2m}{2}Q_m^g$$
où $T^g_m$ est un opérateur différentiel de degré $2m-4$ au plus (voir la proposition \ref{propsuitepk})
et $Q_m^g$ est la Q-courbure associée à $\Pan_m^g$. Cette écriture est justifiée car $\Pan_m^g$ est autoadjoint,
\emph{cf.} \cite{GrahamZworski-scatmat}. Supposons que sur $M$, $\Pan_k^g$
et $\Pan^g_l$ soient inversibles, avec $2k+1\leq n\leq2k+3$ et $l<k$, et notons $G_k$ et $G_l$
leurs fonctions de Green respectives. On pose comme précédemment $H=\clap_{n,l}G_l^g(p,.)$,
$\hat{g}=H^{\frac{4}{n-2l}}g$ et 
$$\hat{G}=H^{-\frac{n-2k}{n-2l}}G_k$$
Alors $\hat{g}$ est asymptotiquement plate à l'ordre $2$, avec $\hat{Q}_l=0$, et
$$\hat{\Pan}_k\hat{G}=0\,.$$
Avec les mêmes notations que dans la preuve ci-dessus on a alors
\[
0 = \int_{M_R}\hat{\Pan}_k\hat{G}\ud\hat{\vol} = -\oint_{S_R}\hat{n}(\hat{\Delta}^{k-1}\hat{G})d\hat{\sigma}
      -\oint_{S_R}\hat{T}_k(d\hat{G})(\hat{n})d\hat{\sigma}
   + \frac{n-2k}{2}\int_{M_R}\hat{Q}_k\hat{G}\ud\hat{\vol}\,.
\]
Exactement comme ci-dessus, le premier terme du membre de droite tend vers $\clap_{n,k}A$ (où $A$ est
donné par le théorème \ref{theodvpG}) et le second vers $0$ (il faut jeter un \oe{}il rapide aux coefficients
de $T_k$, de façon analogue à ce qui est fait dans la preuve du théorème \ref{theoinvas}). On a donc
$$A=-\frac{n-2k}{2\clap_{n,k}}\int_{\hat{M}}\hat{Q}_k\hat{G}\ud\hat{\vol}\,.$$
On peut imaginer exploiter la nullité de $\hat{Q}_l$ dans l'intégrande du membre de droite. Mais malgré les
travaux actuels, en particulier de Juhl et ses collaborateurs \cite{Juhl-Pk,JuhlFalk-recQcuv}, les relations
entre les différentes Q-courbures sont compliquées et encore trop mal comprises pour être utilisées ici.
\label{rqmassepos}\end{rema}

\subsection{Espaces sphériques}

\subsubsection{Masse et revêtements} \label{sssecrevet}

La positivité de la fonction de Green est une conséquence du principe du maximum dans le cas du laplacien
conforme, $k=1$. Dans les autres cas, c'est une hypothèse technique. Elle permet néanmoins de montrer également
la croissance de la métrique de Habermann-Jost par revêtements conformes.
\begin{prop}\label{propmasserevet} Soient $(M,[g])$ et $(N,[h])$ deux variétés compactes de dimension $n$
munies de classes conformes, et $k$ tel que $2k+1 \leq n \leq 2k+3$. Soit
$$f: (M,[g]) \to (N,[h])$$
un revêtement conforme.
Si $\Pan_k^g: C^\infty(M) \to C^\infty(M)$ est inversible, alors
$\Pan_k^h: C^\infty(N) \to C^\infty(N)$ l'est aussi, et pour tout $p\in N$ :
\be\textrm{si }f(x)=p,\quad A^h_p = A^{f^*h}_x + \sum_{\substack{y \in f^{-1}(p)\\y \neq x}} G^{f^*h}_k(x,y)\,.
\label{relmasserevet} \ee
\end{prop}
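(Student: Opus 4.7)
The invertibility of $\Pan_k^h$ is cheap: the pulled-back metric $f^*h$ lies in $[g]$, so $\Pan_k^{f^*h}$ is invertible (invertibility is a conformal invariant). Since $f$ is a local isometry from $(M,f^*h)$ to $(N,h)$ and GJMS operators are natural under local isometries, for $u\in C^\infty(N)$ one has $\Pan_k^{f^*h}(u\circ f)=(\Pan_k^h u)\circ f$. Hence $\ker\Pan_k^h\hookrightarrow\ker\Pan_k^{f^*h}=\set{0}$ via $u\mapsto u\circ f$; by self-adjointness and ellipticity of $\Pan_k^h$, triviality of its kernel gives invertibility.

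For the mass formula I would first reduce to a convenient representative. Pick $\bar h=\psi^{\frac{4}{n-2k}}h$ normal conforme à l'ordre $4$ en $p$; by Theorem \ref{theocovconf} this leaves $A_p^h$ unchanged up to the (known) factor $\psi(p)^2$, and since $f$ is a local isometry the pull-back $f^*\bar h$ is normal conforme à l'ordre $4$ at every preimage of $p$, in particular at $x$. So without loss of generality I assume $h$ itself to be conformally normal at $p$ and $f^*h$ at every point of $f^{-1}(p)$.

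The core is a Green-function identity obtained by pulling back $\Pan_k^h G_k^h(p,\cdot)=\delta_p$ via the covering. Testing against $\psi\in C^\infty_c(M)$, the local-isometry property plus naturality of $\Pan_k$ gives
$$\int_M G_k^h(p,f(y))\,\Pan_k^{f^*h}\psi(y)\,\ud\vol_{f^*h}=\int_N G_k^h(p,q)\,\Pan_k^h(f_*\psi)(q)\,\ud\vol_h=(f_*\psi)(p)=\sum_{x'\in f^{-1}(p)}\psi(x'),$$
so $\Pan_k^{f^*h}[G_k^h(p,f(\cdot))]=\sum_{x'\in f^{-1}(p)}\delta_{x'}$. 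Uniqueness of the inverse of $\Pan_k^{f^*h}$ then yields
$$G_k^h(p,f(y))=\sum_{x'\in f^{-1}(p)}G_k^{f^*h}(x',y).$$

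Now compare asymptotics at $y\to x$. Since $f$ is a local isometry near $x$, the $h$-distance from $p$ to $f(y)$ coincides with the $f^*h$-distance $\tilde r$ from $x$ to $y$, so Theorem \ref{theodvpG} applied on $(N,h)$ at $p$ gives $G_k^h(p,f(y))=\clap_{n,k}^{-1}\tilde r^{2k-n}+A_p^h+o(1)$, and applied on $(M,f^*h)$ at $x$ gives $G_k^{f^*h}(x,y)=\clap_{n,k}^{-1}\tilde r^{2k-n}+A_x^{f^*h}+o(1)$. For each $x'\in f^{-1}(p)$ with $x'\neq x$, $G_k^{f^*h}(x',\cdot)$ is smooth near $x$ and tends to $G_k^{f^*h}(x',x)=G_k^{f^*h}(x,x')$ (self-adjointness of $\Pan_k^{f^*h}$ gives symmetry of its Green kernel). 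Subtracting the singular terms and letting $y\to x$ yields \eqref{relmasserevet}. The only steps needing care are the bookkeeping of delta-function pull-backs via the covering (handled by the pushforward $f_*\psi(q)=\sum_{y\in f^{-1}(q)}\psi(y)$) and the symmetry of $G_k^{f^*h}$; everything else is a direct application of results already proved.
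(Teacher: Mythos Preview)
Your proof is correct and takes a genuinely more direct route than the paper's. The paper goes to considerable lengths to handle the case where $f$ is not Galois: it builds a common \emph{finite Galois} cover $E\to M$, $E\to N$ (via the normal core of $\pi_1(M)$ in $\pi_1(N)$), works with the pseudo-inverse $G_k^e$ on $E$ (where $\Pan_k^e$ may fail to be invertible), sums over deck transformations to recover $G_k^g$ and $G_k^h$ separately, and finally relates them by a double-coset decomposition of $\Gamma_2$ modulo $\Gamma_1$. The author's stated motivation is that the \emph{a priori} candidate $H(p,q)=\sum_{y\in f^{-1}(q)}G_k^g(x,y)$ is not obviously independent of the choice of $x\in f^{-1}(p)$ when $f$ is non-Galois.

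You sidestep this entirely by summing over the other variable: the distributional identity $\Pan_k^{f^*h}\bigl[G_k^h(p,f(\cdot))\bigr]=\sum_{x'\in f^{-1}(p)}\delta_{x'}$, obtained via the fiber-sum pushforward $f_*$ and the intertwining relation $\Pan_k^h\circ f_*=f_*\circ\Pan_k^{f^*h}$ (which only uses that $\Pan_k$ is a natural differential operator under local isometries), forces $G_k^h(p,f(\cdot))=\sum_{x'\in f^{-1}(p)}G_k^{f^*h}(x',\cdot)$ by invertibility of $\Pan_k^{f^*h}$ (elliptic regularity upgrades uniqueness from $C^\infty$ to distributions). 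No Galois hypothesis is needed anywhere. Your argument is shorter and more elementary; the paper's approach has the minor structural advantage of exhibiting the formula $G_k^h(p,q)=\sum_{y\in f^{-1}(q)}G_k^g(x,y)$ with the sum on the $q$-fiber and the independence of $x$ explicitly established, but this is equivalent to yours via the symmetry of $G_k^{f^*h}$ that you already invoke.
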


\begin{coro} \label{corolcroismasse} Sous les mêmes hypothèses, si de plus
$$\forall x,y \in M,\quad G_k^g(x,y) > 0$$
alors pour tout $x \in M$, $A^h_{f(x)}\geq A_x^{f^*h}$. L'égalité n'est atteinte en un point que si $f$ est une
équivalence conforme. \end{coro}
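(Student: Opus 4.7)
Le plan est de d\'eduire directement le corollaire de la formule \eqref{relmasserevet} de la proposition \ref{propmasserevet}, par un simple examen des signes. La premi\`ere \'etape serait de montrer que sous l'hypoth\`ese $G_k^g > 0$, la fonction $G_k^{f^*h}$ est elle aussi strictement positive sur $M\times M$ priv\'e de la diagonale. Puisque $f: (M,[g]) \to (N,[h])$ est un rev\^etement conforme, la m\'etrique tir\'ee en arri\`ere $f^*h$ appartient \`a la classe $[g]$, et la loi de transformation conforme des fonctions de Green rappel\'ee en introduction fournit une fonction $\psi > 0$ sur $M$ telle que $G_k^{f^*h}(x,y) = \psi(x)^{-1}\psi(y)^{-1} G_k^g(x,y)$, d'o\`u la positivit\'e annonc\'ee. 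Chaque terme de la somme dans \eqref{relmasserevet} est alors strictement positif, et l'in\'egalit\'e $A^h_{f(x)} \geq A^{f^*h}_x$ en d\'ecoule pour tout $x\in M$.

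Pour le cas d'\'egalit\'e en un point $x$, on observe que $A^h_{f(x)} = A^{f^*h}_x$ \'equivaut \`a l'annulation de la somme, donc \`a l'identit\'e $f^{-1}(f(x)) = \set{x}$. Or $M$ et $N$ \'etant compactes et connexes, $f$ est un rev\^etement de degr\'e fini constant: si une fibre se r\'eduit \`a un point, toutes le font. L'application $f$ est alors un diff\'eomorphisme local bijectif, donc un diff\'eomorphisme; \'etant conforme, elle r\'ealise une \'equivalence conforme entre $(M,[g])$ et $(N,[h])$.

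Aucun obstacle technique majeur ne se pr\'esente ici: toute la substance est contenue dans la proposition \ref{propmasserevet}, et la preuve du corollaire se r\'esume \`a combiner la covariance conforme de la fonction de Green avec la constance du degr\'e d'un rev\^etement entre vari\'et\'es compactes connexes.
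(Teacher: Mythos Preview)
Your proposal is correct and follows essentially the same approach as the paper: positivity of $G_k^{f^*h}$ via conformal covariance, then reading off the inequality and the equality case directly from formula \eqref{relmasserevet}. The paper's own proof is terser but identical in substance; you have simply spelled out the details (the explicit transformation law for the Green function and the constancy of the covering degree) that the paper leaves implicit.
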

\begin{coro} Sous les mêmes hypothèses, si de plus $\forall p \in M$, $A_p^g\geq 0$, alors les classes
conformes $[g]$ et $[h]$ admettent des métriques de Habermann-Jost respectives $\mathfrak{g}$ et $\mathfrak{h}$.
Celles-ci vérifient
$$f^*\mathfrak{h} \geq \mathfrak{g} $$
sur le fibré tangent $TM$. L'égalité n'est atteinte sur un vecteur non nul que si $f$ est un isomorphisme conforme,
auquel cas $f^*\mathfrak{h} = \mathfrak{g}$ partout.
\label{corolcroismethab}\end{coro}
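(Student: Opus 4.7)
L'id�e est de transf�rer l'in�galit� \eqref{relmasserevet} sur les masses, �tablie dans la proposition \ref{propmasserevet}, en une in�galit� sur les m�triques de Habermann-Jost via la formule de d�finition \ref{defmetHab}. La premi�re �tape consiste � justifier l'existence de $\mathfrak{g}$ et $\mathfrak{h}$. L'hypoth�se $A_p^g\geq 0$ pour tout $p\in M$ est conform�ment invariante (en tant que signe), donc $A_x^{f^*h}\geq 0$ pour tout $x\in M$, et la m�trique $\mathfrak{g}$ est d�finie partout. Pour $\mathfrak{h}$, j'utiliserais le corollaire \ref{corolcroismasse}, qui donne directement $A^h_{f(x)}\geq A_x^{f^*h}\geq 0$ pour tout $x\in M$. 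Comme $f$ est surjective (en tant que rev�tement), cela fournit $A^h_q\geq 0$ pour tout $q\in N$.

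L'�tape cl� est d'exprimer $f^*\mathfrak{h}$ et $\mathfrak{g}$ dans le m�me rep�re. Comme $\mathfrak{g}$ est ind�pendante du choix de $g$ dans sa classe conforme, on peut l'�crire � l'aide de $f^*h\in[g]$: $\mathfrak{g}_x = (A_x^{f^*h})^{2/(n-2k)} (f^*h)_x$. Par ailleurs, $(f^*\mathfrak{h})_x = (A^h_{f(x)})^{2/(n-2k)} (f^*h)_x$. On en d�duit, pour tout $v\in T_xM$,
\[
(f^*\mathfrak{h})_x(v,v) - \mathfrak{g}_x(v,v) = \left[\left(A^h_{f(x)}\right)^{\frac{2}{n-2k}} - \left(A_x^{f^*h}\right)^{\frac{2}{n-2k}}\right](f^*h)_x(v,v)\,.
\]

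Pour conclure sur l'in�galit�, j'invoquerais le fait que $t\mapsto t^{2/(n-2k)}$ est strictement croissante sur $[0,+\infty)$ (on a $n-2k\in\set{1,2,3}$), puis l'in�galit� $A^h_{f(x)}\geq A_x^{f^*h}\geq 0$ du corollaire \ref{corolcroismasse}. Cela donne $(f^*\mathfrak{h})_x(v,v)\geq \mathfrak{g}_x(v,v)$ pour tout $v$, avec �galit� pour un $v$ non nul si et seulement si $A^h_{f(x)} = A^{f^*h}_x$, puisque $(f^*h)_x(v,v)>0$. Pour le cas d'�galit�, il suffit d'appliquer la partie rigidit� du corollaire \ref{corolcroismasse}: l'�galit� des masses en un point force $f$ � �tre une �quivalence conforme, donc un diff�omorphisme, et $f^{-1}(f(x)) = \set{x}$ rend vide la somme de \eqref{relmasserevet} en tout point, d'o� $A^h_{f(x)} = A_x^{f^*h}$ partout, puis $f^*\mathfrak{h}=\mathfrak{g}$ partout.

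Il n'y a pas d'obstacle s�rieux: la proposition \ref{propmasserevet} et son corollaire font tout le travail analytique, et l'argument se r�duit � une v�rification alg�brique exploitant la forme explicite de la d�finition \ref{defmetHab} et la monotonie d'une puissance � exposant positif. Le seul point d'attention est de bien justifier que $\mathfrak{g}$ se calcule indiff�remment avec $g$ ou avec $f^*h$, ce qui rel�ve de la remarque suivant la d�finition \ref{defmetHab} sur l'ind�pendance de la construction.
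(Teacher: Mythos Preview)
Your proposal is correct and follows essentially the same route as the paper: use conformal covariance to pass from $A^g$ to $A^{f^*h}$, write both $\mathfrak{g}$ and $f^*\mathfrak{h}$ as conformal multiples of $f^*h$, and then apply Corollary~\ref{corolcroismasse} together with the monotonicity of $t\mapsto t^{2/(n-2k)}$; the paper's own proof is in fact slightly terser than yours (it does not spell out the monotonicity argument or the surjectivity of $f$).
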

\begin{rema} Habermann et Jost \cite{Habermann-LN,HabermannJost-methabjost} démontrent, dans le cas $k=1$, le
corollaire \ref{corolcroismethab} à l'aide d'une formule analogue à \eqref{relmasserevet}, qui n'est valable
que pour un revêtement galoisien. Le cas général nécessite quelques détails supplémentaires.
\end{rema}

\begin{proof}[Démonstration des corollaires]
Ce sont des conséquences élémentaires de la proposition. Par covariance conforme, la positivité
de $G^{f^*h}_k$ équivaut à celle de $G_k^g$.
La formule \eqref{relmasserevet} assure donc l'inégalité stricte du corollaire \ref{corolcroismasse}, à moins
que les fibres de $f$ ne se réduisent à un point, auquel cas on a égalité pour tout $x$.

En particulier si $A^g_x \geq 0$ pour tout $x \in M$, alors par covariance conforme $A^{f^*h}_x > 0$, donc
$A^h_p>0$ pour tout $p \in M$ (le revêtement $f$ entre variétés compactes connexes étant nécessairement surjectif). 
Appliquant la définition \ref{defmetHab}:
$$\forall x \in M,\quad \mathfrak{g}_x = \left(A_p^{f^*h}\right)^{\frac{2}{n-2k}} (f^*h)_x \quad
\textrm{et} \quad (f^*\mathfrak{h})_x = \left(A_{f(x)}^h\right)^{\frac{2}{n-2k}}(f^*h)_x\,.$$
Le corollaire \ref{corolcroismethab} est alors une application directe du précédent.
\end{proof}

\begin{proof}[Démonstration de la proposition \ref{propmasserevet}] On choisit
$g=f^*h$.
L'idée est de prouver que, pour $p,q \in N$, si on fixe un antécédent $x$ de $p$ par $f$,
$$H(p,q)=\sum_{y\in f^{-1}(q)}G_k^g(x,y)$$
définit (le noyau intégral d') un inverse de $\Pan_k^h$. C'est élémentaire si $f$ est un revêtement
galoisien. Dans le cas contraire, il n'est pas évident que cette définition soit indépendante
de $x$. On s'en convainc en écrivant $M$ et $N$ comme quotients d'une même variété compacte.

Détaillons. Soient $\tilde\G_1$ et $\tilde\G_2$ les groupes fondamentaux respectifs de $M$ et $N$ (pour
un choix convenable de points-bases qui importe peu ici). \emph{Via} $f$, le groupe $\tilde\Gamma_1$ est
identifié à un sous-groupe d'indice fini de $\tilde\Gamma_2$. Le noyau $\Gamma_0$ de l'action à droite
de $\tilde\Gamma_2$ sur $\lcoset{\tilde\Gamma_1}{\tilde\Gamma_2}$ (\cad{} l'ensemble des éléments qui agissent
comme la permutation identité) est un sous-groupe distingué d'indice fini dans $\tilde\G_2$, inclus dans
$\tilde\G_1$ qui est le stabilisateur d'une classe à gauche. D'après la théorie galoisienne des revêtements
(voir \cite{polyLannes} par exemple), les deux flèches
$$\xymatrix{ & E= \lcoset{\Gamma_0}{\tilde{E}} \ar^{\pi_2}[dr] \ar_{\pi_1}[dl]& \\
  M=\lcoset{\tilde\Gamma_1}{\tilde{E}} & & N = \lcoset{\tilde\Gamma_2}{\tilde{E}}
}$$
où $\tilde E$ est le revêtement universel de $M$ et $N$, sont des revêtements galoisiens à nombres de feuillets
finis. Soient $\G_1$ et
$\G_2$ leurs groupes d'automorphismes respectifs. On munit $E$ de la métrique $e=\pi_2^*h = \pi_1^*g$.

L'opérateur $\Pan_k^e$ pourrait ne pas être inversible. Néanmoins, autoadjoint (voir
\cite{GrahamZworski-scatmat}) et elliptique sur la variété riemannienne compacte $(E,e)$, il admet un unique
pseudo-inverse $G_k^e$, tel que
$$\Pan_k^e G_k^e = G_k^e \Pan_k^e = \Id - \Pi^e$$
où $\Pi^e$ est le projecteur $L^2$-orthogonal (au sens de $e$) sur $\ker \Pan_k^e$. On identifiera opérateur et
noyau intégral dans la suite. Ainsi $G_k^e$ est une fonction $C^\infty$ sur $E \times E$ privée de
sa diagonale.

Pour $a ,b \in E$ dans des $\G_1$-orbites différentes, on pose
$$H_E(a,b)=\sum_{\gamma \in \G_1} G_k^e(a,\gamma.b)\,.$$
C'est clairement un opérateur $C^\infty(E) \to C^\infty(E)$. Le groupe $\G_1$ agissant par isométries
de $e$, $H_E$ est $\G_1\times\G_1$-invariante et passe donc \og{}au quotient\fg{} en une fonction
$H_M$ sur $M\times M$ privé de sa diagonale, de classe $C^\infty$.

Soit $U$ un ouvert de $M$ tel que $\pi_1^{-1}(U)$ est la réunion des ouverts disjoints
$V_1,\dotsc, V_d \subset E$, et que pour chacun d'eux $\pi_1|_{V_j}: V_j \to U$ est une isométrie.
On note
$$\pi_{1,j}^{-1} = \left(\pi_1|_{V_j}\right)^{-1}: U \to V_j\,.$$
Soit $u$ une fonction $C^\infty$ sur $M$ à support dans $U$. Le support de $u\circ\pi_1$ est
inclus dans $\bigcup_jV_j$; on a donc pour tout $a \in E$
\begin{align*}
u(\pi_1(a))-\Pi^e(u\circ \pi_1)(a) & = \int_M G_k^{e}(a,b) \Pan_k^{e}\big(u\circ \pi_1\big)(b) \vol_{e}(\ud b)\\
   & = \sum_j \int_{V_j} G_k^{e}(a,b) \Pan_k^{e}\big(u \circ \pi_1\big)(b)\vol_{e} (\ud b) \\
   & = \sum_j \int_U G_k^{e}\big(a,\pi_{1,j}^{-1}(y)\big) \Pan_k^gu(y) \vol_g(\ud y)\\
\intertext{avec le changement de variables $b=\pi_{1,j}^{-1}(y)$ sur chaque $V_j$. D'où}
u(\pi_1(a))-\Pi^e(u\circ \pi_1)(a) & = \int_U H_E\left(a,\pi_{1,1}^{-1}(y)\right) \Pan_k^g(u)(y) \vol_g(\ud y)\\
   & = \int_M H_M(\pi_1(a),y) \Pan_k^g(u)(y) \vol_g(\ud y)\,.
\end{align*}
Par ailleurs,
$\Pi^e(u\circ\pi_1)$ est une fonction de $\ker \Pan_k^e$, $\G_1$-invariante de manière
évidente. Elle passe donc \og{}au quotient\fg{} en une fonction sur $M$ qui annule $\Pan_k^g$. Comme
on fait l'hypothèse que cet opérateur est inversible, on a
$$\Pi^e(u\circ \pi_1)=0\,.$$
D'où, pour $x=\pi_1(a)$:
$$u(x) = \int_M H_M(x,y)\Pan_k^gu(y) \vol_g(\ud y)\,.$$
Par linéarité, à l'aide d'une partition de l'unité, ce résultat se généralise à toute fonction
$C^\infty$ sur $M$. La symétrie de $H_M$ en $x$ et $y$ et le fait que $\Pan_k^g$ est autoadjoint
impliquent que $H_M$ est aussi un inverse à droite de $\Pan_k^g$. D'où
\be H_M(x,y) = G_k^g(x,y) = \sum_{\gamma \in \G_1}G_k^e(a,\gamma.b) \label{Grevet} \ee
pour n'importe quels $a,b$ dans les fibres respectives de $x$ et de $y$.

Pour toute fonction $u \in \ker \Pan_k^h$ sur $N$, on a $u \circ f \in \ker \Pan_k^g = \set{0}$ sur $M$.
L'opérateur $\Pan_k^h$, fredholmien d'indice $0$, est donc inversible. La formule \eqref{Grevet} vaut alors aussi
pour le revêtement $\pi_2: E \to N$; on a donc, pour
tous $p,q \in N$, $a,b\in E$ dans leurs fibres respectives par $\pi_2$, et tout $x \in M$ au-dessus
de $p$:
\begin{align*}
G_k^h(p,q) & = \sum_{\gamma_2 \in \G_2} G_k^e(a,\gamma_2.b) \\
   & = \sum_{[s] \in \G_1 \backslash \G_2} \sum_{\gamma_1\in \G_1} G_k^e(a,\gamma_1.s.b)\\
   & = \sum_{[s] \in \G_1 \backslash \G_2} G_k^g(x,\pi_1(s.b))\\
   & = \sum_{y \in f^{-1}(q)} G_k^g(x,y)\,.
\end{align*}
On choisit $h$ vérifiant \eqref{ricnormalconf} en $p$; alors $f^*h$ est normale conforme à l'ordre $4$ en
$x$. On fait tendre $q$ vers $p$. Le comportement asymptotique énoncé par le théorème \ref{theodvpG}
fournit alors \eqref{relmasserevet}. Le cas où
$h$ n'est pas normale conforme s'en déduit par covariance conforme.
\end{proof}

\subsubsection{Quotients de la sphère} \label{sssecqsphere}

La proposition \ref{propmasserevet} permet de calculer explicitement la métrique de Haber\-mann-Jost
des quotients d'une sphère $\Sph^n$ par un sous-groupe fini de $SO(n+1)$ agissant librement.

Les fonctions de Green des opérateurs $\Pan_k$ sur la sphère ronde sont connus: voir Branson
\cite{Branson-funcdet}. Comme elles n'apparaissent dans cette référence qu'à constante multiplicative près,
on les recalcule ici de manière simple. Soient
$$\xi = (\xi^i)_{0\leq i\leq n}: \Sph^n \to \R^{n+1}$$
l'inclusion canonique, $g_\circ = \xi^*\eucl$ la métrique ronde, et $N=(1,0\dots0)$ le pôle nord. On note
$$\phi: \Sph^n-\set{N} \to  \R^n$$
la projection stéréographique. C'est un isomorphisme conforme:
$$\phi_*g_\circ = \left(\frac{2}{1+\norm{x}^2}\right)^2\sum_{i=1}^ndx^{i\,2}\,.$$
Sur $\R^n$ muni de la métrique plate $\eucl$, il est connu que
$$G_k^{\eucl}(x,y)=\frac{1}{\clap_{n,k}} \norm{x-y}^{2k-n}$$
est, disons, un inverse à gauche de $\Pan_k^\eucl=\Delta_\eucl^k$ sur l'espace des fonctions
lisses à support compact. Par covariance conforme, on définit un inverse à gauche de $\Pan_k^{g_\circ}$
sur l'espace des fonctions de classe $C^\infty$ sur $\Sph^n$ nulles au voisinage de $N$ par
\begin{align}
G_k^{g_\circ}(p,q)&=\frac{1}{\clap_{n,k}}\left(\frac{2}{(1+\norm{\phi(p)}^2)}
   \frac{2}{(1+\norm{\phi(q)}^2)}\norm{\phi(p)-\phi(q)}^2\right)^{\frac{2k-n}{2}}\notag\\
   & = \frac{1}{\clap_{n,k}} \norm{\xi(p)-\xi(q)}^{2k-n}\,. \label{Gsph}
\end{align}
(Le passage de la première à la seconde ligne est classique. C'est une conséquence de la proposition
\ref{proprconfplat}, puisque la projection stéréographique est la restriction à $\Sph^n$ de l'inversion
par rapport à la sphère de centre $N$ et de rayon $\sqrt{2}$.) Par invariance sous $SO(n+1)$ et symétrie
en $p$ et $q$, la formule \eqref{Gsph} définit un inverse de $\Pan_k^{g_\circ}$.

L'existence de cette fonction de Green assure, d'après le théorème \ref{theodvpG}, que la masse des
opérateurs $\Pan_k^{g_\circ}$ est bien définie. Le groupe $SO(n+1)$ agissant transitivement par isométries,
la fonction
$$p \mapsto A_p^{g_\circ}$$
est constante. Si elle était non nulle, quitte à prendre sa valeur absolue, elle permettrait de définir une
métrique de Habermann-Jost sur $\Sph^n$. Cette métrique serait invariante par le groupe conforme
de la sphère, ce qui n'est pas possible car celui-ci n'est pas compact. On en déduit donc:
\begin{prop} La masse des opérateurs GJMS sur la sphère est nulle.
\end{prop}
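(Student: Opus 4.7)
\emph{Plan de preuve.} Le plan est de supposer la masse non nulle, de fabriquer � partir d'elle une m�trique conform�ment canonique sur $\Sph^n$, et d'en d�duire une contradiction avec la non-compacit� du groupe conforme de la sph�re ronde.

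Premi�rement, je note que la formule \eqref{Gsph} fournit explicitement la fonction de Green $G_k^{g_\circ}$, ce qui assure que $\Pan_k^{g_\circ}$ est inversible et donc que le th�or�me \ref{theodvpG} s'applique en tout point $p\in\Sph^n$. Il d�livre un coefficient $A^{g_\circ}_p$ bien d�fini. Comme $SO(n+1)$ agit transitivement sur $\Sph^n$ par isom�tries de $g_\circ$ et que la masse est construite canoniquement � partir du couple $(g_\circ,p)$, l'application $p\mapsto A^{g_\circ}_p$ est constante, disons �gale � $A_0\in\R$.

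Deuxi�mement, je suppose par l'absurde que $A_0\neq 0$. Quitte � remplacer $A_0$ par $\abs{A_0}$ (ce qui est compatible avec la construction de la d�finition \ref{defmetHab} appliqu�e ici point par point), j'obtiens une m�trique riemannienne
$$\mathfrak{g}=\abs{A_0}^{\frac{2}{n-2k}}g_\circ$$
d�finie partout sur $\Sph^n$. Le point crucial est que $\mathfrak{g}$ ne d�pend que de la classe conforme $[g_\circ]$: pour tout diff�omorphisme $\Phi$ conforme de $(\Sph^n,[g_\circ])$, la covariance conforme de la densit� $A_p^g$ (th�or�me \ref{theocovconf} et d�finition \ref{defmassepk}) combin�e � celle du facteur $g_\circ$ entra�ne $\Phi^*\mathfrak{g}=\mathfrak{g}$. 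Autrement dit, $\Phi$ est une isom�trie de $\mathfrak{g}$.

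La contradiction vient alors de deux faits classiques: le groupe conforme de la sph�re ronde $(\Sph^n,g_\circ)$ est non compact (il est isomorphe � une composante de $O(n+1,1)$), tandis que le th�or�me de Myers-Steenrod garantit que le groupe des isom�tries d'une m�trique riemannienne sur une vari�t� compacte est un groupe de Lie compact. Le seul point d�licat du plan est donc la v�rification que la construction canonique de $\mathfrak{g}$ est bien invariante par \emph{tout} le groupe conforme, et non seulement par $SO(n+1)$; mais ceci d�coule pr�cis�ment de ce que $A_p^g$ est une densit� conforme de poids $2k-n$, ce qui est exactement l'objet de la d�finition \ref{defmassepk}. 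Conclusion: $A_0=0$.
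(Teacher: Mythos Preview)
Your proof is correct and follows essentially the same argument as the paper: the mass is constant by transitivity of $SO(n+1)$, and if it were nonzero its absolute value would yield a conformally canonical metric on $\Sph^n$, contradicting the non-compactness of the conformal group. You add the explicit reference to Myers--Steenrod, which the paper leaves implicit, but the underlying idea is identical.
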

Le calcul de la fonction de Green de $\Pan_k^{g_\circ}$ et la proposition \ref{propmasserevet} donnent
alors la métrique de Habermann-Jost des espaces sphériques.
\begin{prop} Soit $\G$ un sous-groupe fini de $SO(n+1)$ agissant librement par isométries sur la sphère.
Soit $g_0$ la métrique ronde induite sur $\lcoset{\G}{\Sph^n}$. La métrique de Habermann-Jost
de $[g_0]$ est
$$\mathfrak{g}_0(p)=\left(A_p^0\right)^{\frac{2}{n-2k}} g_0(p)$$
où, pour n'importe quel élément $\xi$ de la fibre au-dessus de $p$:
$$A_p^0=\frac{1}{\clap_{n,k}}\sum_{R \in \G-\set{\Id}} \norm{(R-\Id)\xi}^{2k-n}\,.$$
\label{propquosphere} \end{prop}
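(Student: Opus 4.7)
Le plan consiste � r�unir trois ingr�dients d�j� obtenus~: la proposition \ref{propmasserevet} sur le comportement de la masse sous rev\^etement riemannien, la proposition pr�c�dente (la masse des op�rateurs GJMS sur la sph�re ronde est identiquement nulle) et la formule explicite \eqref{Gsph} pour la fonction de Green de $\Pan_k^{g_\circ}$.

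Je commence par v�rifier les hypoth�ses de la proposition \ref{propmasserevet} appliqu�e au rev\^etement quotient $f: (\Sph^n, g_\circ) \to (\lcoset{\G}{\Sph^n}, g_0)$~: par d�finition de $g_0$ on a $f^*g_0 = g_\circ$, et l'existence de la fonction de Green $G_k^{g_\circ}$ donn�e par \eqref{Gsph} assure l'inversibilit� de $\Pan_k^{g_\circ}$. Pour un point $p$ du quotient et un rel\`evement $\xi \in f^{-1}(p)$, la fibre s'identifie \`a l'orbite $\G\cdot\xi$, et l'action �tant libre, les $R\xi$ pour $R \in \G$ sont deux \`a deux distincts. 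La formule \eqref{relmasserevet} prend alors la forme
$$A_p^{g_0} = A_\xi^{g_\circ} + \sum_{R \in \G - \set{\Id}} G_k^{g_\circ}(\xi, R\xi)\,,$$
et le premier terme du membre de droite est nul d'apr\`es la proposition pr�c�dente.

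Il reste \`a �valuer chaque terme de la somme gr\^ace \`a la formule \eqref{Gsph}~: on a
$$G_k^{g_\circ}(\xi, R\xi) = \frac{1}{\clap_{n,k}}\norm{\xi - R\xi}^{2k-n} = \frac{1}{\clap_{n,k}}\norm{(R - \Id)\xi}^{2k-n}\,,$$
ce qui fournit directement l'expression annonc�e de $A_p^0$. La libert� de l'action de $\G$ garantit que $(R - \Id)\xi \neq 0$ pour tout $R \neq \Id$, donc que chaque terme de la somme est bien d�fini et strictement positif. En particulier $A_p^0 > 0$, et la d�finition \ref{defmetHab} livre imm�diatement la forme annonc�e de la m�trique de Habermann-Jost.

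Aucune �tape ne pr�sente de difficult� majeure~: l'essentiel du travail technique (covariance conforme, comportement sous rev\^etement, annulation de la masse sur la sph\`ere) a �t� effectu� en amont. Le seul point \`a surveiller est la correspondance exacte entre la fibre de $f$ au-dessus de $p$ et l'orbite $\G\cdot\xi$, dont d�coule l'indexation par $R \in \G - \set{\Id}$ dans la formule finale.
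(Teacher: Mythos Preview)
Your proof is correct and follows exactly the approach indicated in the paper: combine the covering formula of proposition \ref{propmasserevet}, the vanishing of the mass on the round sphere, and the explicit Green function \eqref{Gsph}. The paper itself states this proposition as a direct consequence of these ingredients without writing out further details, so you have simply made the argument explicit.
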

Cette métrique n'est pas multiple de la métrique ronde en général: Habermann \cite{Habermann-LN} le
montre dans le cas $k=1$ pour l'espace lenticulaire $L(5,2)$; pour l'espace $L(7,2)$ il calcule que la
métrique de Habermann-Jost a une courbure scalaire de signe variable.

\section{Masses et invariants asymptotiques} \label{partieinvas}

On démontre ici le lien entre la masse de l'opérateur GJMS $\Pan_k^g$ et la masse $m_k$,
introduite dans la définition \ref{defmasseas}, de la métrique obtenue depuis $g$ par
\og{}projection stéréographique\fg{} à l'aide de la fonction de Green $G_k^g$.

\begin{theo} \label{theoinvas}
Dans le cas $2k+1 \leq n \leq 2k+3$, supposons que $\Pan_k^g$ admet une fonction de Green $G_k$.
Pour $p\in M$, soit $G_p$ une fonction lisse positive sur $M-\set{p}$ égalant $G_k(p,.)$ au voisinage de $p$.
Soit $\hat{g}=G_p^{\frac{4}{n-2k}}g$ sur $M-\set{p}$.

Alors
\begin{align*}\Delta_{\hat{g}}^{k-1}\Scal^{\hat{g}} & \in L^1(M-\set{p},\ud\vol_{\hat{g}})\\
\intertext{et}
A_p^g&=\frac{n-2k}{4(n-1)}m_k(\hat{g})\,.\end{align*}
\end{theo}
\begin{rema}
\begin{enumerate}[(i)]
\item Il s'avère au cours de la preuve que la métrique $\hat{g}$ est asymptotiquement plate à l'ordre
$1$ si $n=2k+1$ et à l'ordre $2$ si $n\geq 2k+2$. Dans le cas $k=1$, le théorème reste donc valable,
en prenant pour $m_1$ la masse ADM rappelée en introduction. Ce cas était déjà connu, voir \cite{LeeParker-Yamabeprob,%
Habermann-LN} qui ont des normalisations différentes pour la masse ADM.
\item Si $g_2=\vphi^{\frac{4}{n-2k}}g_1$, les fonctions de Green des opérateurs $\Pan_k$ respectifs sont reliées
par
\begin{align*}
G_k^{g_2}(p,x)&=\frac{1}{\vphi(p)\vphi(x)}G_k^{g_1}(p,x)\,,\\
\intertext{donc les métriques \og{}explosées\fg{} sont multiples l'une de l'autre par un facteur constant:}
\hat{g}_2&=\vphi(p)^{\frac{-4}{n-2k}}\hat{g}_1\,.\\
\intertext{On a donc}
\Delta_{\hat{g}_2}^{k-1}\Scal^{\hat{g}_2} &= \vphi(p)^{\frac{4k}{n-2k}}\Delta_{\hat{g}_1}^{k-1}\Scal^{\hat{g}_1}\\
\intertext{et}
\ud\vol_{\hat{g}_2} &= \vphi(p)^{\frac{-2n}{n-2k}}\ud\vol_{\hat{g}_1}\,,\\
\intertext{d'où}
m_k(\hat{g}_2)&=\vphi(p)^{-2}m_{k}(\hat{g}_1)\,.
\end{align*}
Le théorème \ref{theoinvas} fournit ainsi une autre preuve de la covariance
conforme de la masse (théorème \ref{theocovconf}) dans le cas $k\geq 2$. (Comme remarqué dans \cite{Habermann-LN},
il en va de même pour $k=1$, car  la masse ADM est aussi homothétiquement équivariante, de poids $n-2$).
\end{enumerate}
\end{rema}

\begin{proof}[Démonstration du théorème \ref{theoinvas}]

D'après cette dernière remarque, quitte à modifier $g$ d'un facteur con\-forme valant $1$ en $p$,
ce qui ne modifie pas $\hat g$, on peut
supposer que $g$ est normale conforme à l'ordre $4$ en $p$, et appliquer le théorème \ref{theodvpG}:
$$G_k(p,x)=\clap_{n,k}^{-1}r^{2k-n}(1 + \clap_{n,k}A r^{n-2k} + \grandO_{n-2k+1,\ln})\,.$$
On notera $\clap=
\clap_{n,k}$ et, pour alléger, on fera les calculs avec
$$\bar{g}=\left(\clap G_p\right)^{\frac{4}{n-2k}}g =\clap^{\frac{4}{n-2k}}\hat{g}\,.$$

On reprend l'idée et les notations de la démonstration du théorème \ref{theomassepos}.
Soient $(x^i)_i$ des cordonnées $g$-géodésiques en $p$. Au voisinage de $p$
\begin{align*}
g(\partial_{x^i},\partial_{x^j}) & = \delta_{ij} -\frac{1}{3} \mathcal{R}_{iklj}x^kx^l+ O(r^3)\,,\\
\intertext{où}
\mathcal{R}_{iklj} &= g(\Rm^g_{(\partial_{x^i},\partial_{x^k})}\partial_{x^l},\partial_{x^j})(p)\,.
\end{align*}
On utilise les coordonnées inversées $z^i=x^i/r^2$ et $\rho=1/r$. D'après les liens \eqref{dxdz} et
\eqref{gz} entre coordonnées $z$ et coordonnées $x$, on obtient pour $\rho\to\infty$ le \da{}
\be \begin{split} \bar{g}(\partial_{z^i},\partial_{z^j}) =\delta_{ij}
       -\frac{1}{3} \rho^{-4}\mathcal{R}_{iklj}&z^kz^l + O(\rho^{-3}) \\
   &+\frac{4\clap}{n-2k}A\rho^{2k-n}\delta_{ij}+o(\rho^{2k-n})\,.
\end{split}\label{dagc} \ee
   
On calcule désormais en coordonnées dans la carte $z^i$, $\partial_i$ désignant
$\partial_{z^i}$, $\bar{g}_{ij}=\bar{g}(\partial_{z^i},\partial_{z^j})$, \emph{etc}. Le développement
asymptotique \eqref{dagc} assure que $\bar{g}$ est asymptotiquement plate à l'ordre $\tau$,
avec $\tau=1$ si $n=2k+1$, et $\tau=2$ sinon.\\

{\it Intégrabilité:}
On note conformément à l'habitude
$$\bar{Q}_k = \frac{2}{n-2k}\bar{\Pan}_k(1)\,.$$
Cette Q-courbure s'écrit comme combinaison linéaire de contractions
totales (\emph{via} $\bar g$) de tenseurs de la forme
\be\bar{\nabla}^{d_1}\bar{\Rm}\otimes\cdots \otimes\bar{\nabla}^{d_a}\bar{\Rm}\,.\label{termeqk}\ee
Le comportement de $\Pan_k$ sous les changements homothétiques de métrique impose
que pour chacun de ces tenseurs apparaissant dans $\bar{Q}_k$, $\sum_b(d_b+2)=2k$. 
Comme $\bar\nabla^i\bar\Rm\bas{kij}\haut{k} = \frac{1}{2}\partial_j\overline\Scal$
d'après l'identité de Bianchi, et que
$$\bar\nabla^i\bar\nabla^j\bar\Rm\bas{ijk}\haut{l} =\frac{1}{2}[\bar\nabla^i,\bar\nabla^j]\bar\Rm\bas{ijk}\haut{l}$$
est quadratique en la courbure d'après la formule de Ricci, la seule des contractions totales de tenseurs de
la forme \eqref{termeqk} qui soit linéaire
en la courbure est
$\bar\Delta^{k-1}\overline{\Scal}$. On a donc
$$\bar{Q}_k=c_k\bar\Delta^{k-1}\overline{\Scal} + \mathscr{Q}_k(\bar\Rm)$$
où $\mathscr{Q}_k(\bar\Rm)$ est un polynôme sans terme linéaire en la courbure. En comparant les linéarisations
sur les variations conformes de la métrique plate il vient
$$c_k=\frac{1}{2n-2}\neq 0\,,$$
voir Branson
\cite[Corollaire 1.5]{Branson-funcdet} pour une formulation plus évoluée.

D'après la loi de variation conforme de $\Pan_k$ et le choix de facteur conforme, on a, du moins au
voisinage de l'infini, $\bar Q_k=0$. Donc
$$\bar{\Delta}^{k-1}\overline{\Scal} = -\frac{1}{c_k}\mathscr{Q}_k(\bar{\Rm})\,.$$
Les termes \eqref{termeqk} qui apparaissent dans $\mathscr{Q}_k(\bar\Rm)$ vérifient tous
$a\geq2$. En raison de la platitude asymptotique on a donc
$$\abs{\bar{\nabla}^{d_1}\bar{\Rm}\otimes\cdots \otimes\bar{\nabla}^{d_a}\bar{\Rm}}_{\bar{g}} =
O(\rho^{-\tau-d_1-2 \cdots - \tau-d_a-2}) = O(\rho^{-a\tau-2k})=O(\rho^{-2\tau-2k})\,.$$
Or $-2\tau-2k<-n$, et $\ud\bar{\vol}\sim dz^1dz^2\dots dz^n$ lorsque $\rho \to \infty$;
on déduit donc
$$\bar{\Delta}^{k-1}\overline{\Scal} \in L^1(M-\set{p},\ud\bar{\vol})\,.$$

{\it Calcul de l'intégrale:} D'après le théorème de la divergence, si $k \geq 2$,
$$\int_{\set{\rho \leq R}} \bar{\Delta}^{k-1}\overline{\Scal}\,\ud\bar{\vol} =
  -\oint_{\set{\rho=R}}\bar{n}(\bar{\Delta}^{k-2}\overline{\Scal})d\bar{\sigma}$$
où $\bar{n}$ est la normale unitaire de $S_R=\set{\rho=R}$ pointant vers l'infini, et
$d\bar{\sigma}$ la mesure de volume sur $S_R$ induite par $\ud\bar{\vol}$.
La platitude asymptotique de $\bar{g}$ implique que
\begin{align*}
\bar{n}&= \left(1+O(\rho^{-\tau})\right)\partial_\rho
   = \left(1+O(\rho^{-\tau})\right)\frac{z^i}{\rho}\partial_i\,,\\
\bar{\Delta} & = -\sum_i\partial_i^2 + O(\rho^{-\tau})\partial_i\partial_j + O(\rho^{-\tau-1})\partial_i\,,\\
\overline{\Scal} & = \partial_j(\partial_i\bar{g}_{ij}-\partial_j\bar{g}_{ii}) + O(\rho^{-2\tau-2})\\
\intertext{et}
d\bar{\sigma}&= (1 + O(\rho^{-\tau}))d\sigma_R\,,
\end{align*}
où $d\sigma_R$ est la mesure de volume sur $S_R$ induite par
la métrique plate à l'infini $\sum_i dz^{i\,2}$. Ainsi
$$\bar{n}(\bar{\Delta}^{k-2}\overline{\Scal})d\bar{\sigma} = \left(\frac{z^l}{\rho}\partial_l\Delta_0^{k-2}
\partial_j (\partial_i\bar{g}_{ij}-\partial_j\bar{g}_{ii})+O(\rho^{-2\tau-2k+1})\right)d\sigma_R\,.$$
où, comme dans la partie \ref{secpkralpha}, $\Delta_0=-\sum_i\partial_i^2$.
Puisque $\Vol(S_R,d\sigma_R) = O(R^{n-1})$ avec $n-1<2\tau+2k-1$, il s'en déduit
\be \int_{M-\set{p}}\bar{\Delta}^{k-1}\overline{\Scal}\,\ud\bar{\vol} = -\lim_{R\to \infty}\oint_{S_R}
   \frac{z^l}{\rho} \partial_l \Delta_0^{k-2}\partial_j(\partial_i\bar{g}_{ij}-\partial_j\bar{g}_{ii}) d\sigma_R\,.
   \label{formulemk}\ee

Utilisant le \da{} \eqref{dagc}, en remarquant que
$$\partial_i\left(\rho^{-4}\mathcal{R}_{ilmj}z^lz^m\right) = 0 = \partial_j\left(\rho^{-4}
  \mathcal{R}_{ilmi}z^lz^m\right)$$
en vertu des propriétés algébriques d'un tenseur de Riemann lorsque $\mathcal{R}_{ilmi}=0$ (ce qui est le cas
lorsque $g$ est normale conforme à l'ordre $4$ en $p$), on calcule
\begin{multline*}
\frac{z^l}{\rho}\partial_l \Delta_0^{k-2} \partial_j (\partial_i\bar{g}_{ij}-\partial_j\bar{g}_{ii})=\\
   -4(n-1)\clap A 2^{k-1}(k-1)!(n-2)(n-4)\dotsm(n-2k+2) \rho^{1-n}
   + O(\rho^{1-n-\tau})\,.
\end{multline*}

Reportant dans \eqref{formulemk}, avec $\Vol(S_R,d\sigma_R) = R^{n-1}\Vol(\Sph^{n-1})$:
\begin{align*}
\int_{M-\set{p}}\bar{\Delta}^{k-1}\overline{\Scal}\,\ud\bar{\vol} & = 
      4(n-1)\clap A\Vol(\Sph^{n-1})2^{k-1}(k-1)!
   (n-2)(n-4)\cdots(n-2k+2)\\
   & = 4(n-1) \clap \frac{\clap}{n-2k}A \,.
\end{align*}
Notons que le calcul précédent dans le cas $k=1$ fournit le même résultat, en remplaçant le membre de gauche
par la masse ADM de $\bar{g}$.

L'énoncé du théorème concerne $\hat{g}=\clap^{\frac{-4}{n-2k}}\bar{g}$. Par homothétie
$$\int_{M-\set{p}}\hat{\Delta}^{k-1}\wh{\Scal}\,\ud\hat{\vol}
   =\clap^{-2}\int_{M-\set{p}}\bar{\Delta}^{k-1}\overline{\Scal}\,\ud\bar{\vol}
   =\frac{4(n-1)}{n-2k}A\,,$$
ce qui est énoncé par la proposition.
\end{proof}

\begin{rema} \begin{enumerate}[(i)]
\item Le raisonnement concernant l'intégrabilité montre plus généralement que si $\hat{g}$ est
asymptotiquement plate à l'ordre $\tau>\frac{n-2k}{2}$, alors on a l'équivalence
$$\hat\Delta^{k-1}\wh\Scal \in L^1(\ud\hat\vol) \quad \Leftrightarrow \quad \hat{Q}_k \in L^1(\ud\hat\vol)\,.$$
La platitude asymptotique n'est néanmoins pas nécessaire pour définir $m_k$, en particulier parce que
les problèmes de changements de coordonnées asymptotiquement plates à l'infini ne se posent pas.
\item Comme expliqué à la remarque \ref{rqmassepos}, les projections stéréographiques utilisant les fonctions
de Green $G_l^g$ pour $n>2l+3$ sont également intéressantes.
D'après le théorème \ref{theodvpG}, il y a un \da{} \eqref{dagc} valable, en
retirant le terme $\frac{4\clap}{n-2k}A\rho^{2k-n}\delta_{ij}+o(\rho^{2k-n})$. La métrique $\hat{g}$
est alors asymptotiquement plate d'ordre $\tau=2$, et de même que dans la preuve, la nullité de
$\hat Q_l$ implique que $\hat{\Delta}^{l-1}\overline{\Scal} = O(\rho^{-4-2l})$ (toujours avec les mêmes
dominations asymptotiques des dérivées). Donc si $k\geq l$,
$$\hat{\Delta}^{k-1}\wh{\Scal}=\hat{\Delta}^{k-l}\hat{\Delta}^{l-1}\wh{\Scal} = O(\rho^{-4-2k})\,,$$
ce qui est $L^1$ si $n$ égale $2k+1$, $2k+2$ ou $2k+3$. Le même calcul que la dans la preuve ci-dessus, en
substituant $0$ à $A$, montre que
$$m_k(\hat g)=0\,.$$
\end{enumerate}
\end{rema}

\begin{merci}
Ce travail a été effectué lors de ma thèse à l'Institut de Mathématiques et Modélisation de Montpellier,
Université Montpellier 2, sous la direction de Marc Herzlich. Je le remercie chaleureusement pour les
discussions que j'ai eues avec lui, ses remarques et ses encouragements.
\end{merci}

\bibliographystyle{amsplain}
\bibliography{bibmassepk}

\end{document}